\theoremstyle{definition}
 \newtheorem{definition}{Definition}[section]
\theoremstyle{plain}
 \newtheorem{proposition}[definition]{Proposition}
\theoremstyle{plain}
 \newtheorem{theorem}[definition]{Theorem}
\theoremstyle{definition}
 \newtheorem{example}[definition]{Example}
\theoremstyle{plain}
 \newtheorem{lemma}[definition]{Lemma}
\theoremstyle{plain}
\theoremstyle{remark}
 \newtheorem{remark}[definition]{Remark}
\theoremstyle{definition}
\theoremstyle{plain}
\newcommand{\Ext}{\mathrm{Ext}}
\newcommand{\End}{\mathrm{End}}
\newcommand{\Hom}{\mathrm{Hom}}
\newcommand{\SHom}{\underline{\mathrm{Hom}}}
\newcommand{\Ca}{\mathcal{C}}
\newcommand{\Fun}{\mathrm{F}}
\newcommand{\Def}{\mathrm{Def}}
\newcommand{\Sets}{\mathrm{Sets}}
\newcommand{\SEnd}{\underline{\End}}
\newcommand{\A}{\Lambda}
\newcommand{\G}{\Gamma}
\renewcommand{\k}{\Bbbk}
\renewcommand{\1}{\mathbbm{1}}
\title[Deformations of Gorenstein-projective modules]{On Deformations of Gorenstein-projective modules over Nakayama and triangular matrix algebras} 
\dedicatory{To my friends Wildomar Alarc\'on and Ricardo Pe\~na}
\thanks{The author was supported by the Release Time for Research Scholarship of the Office of Academic Affairs and by the Faculty 
Research Seed Grant funded by the Office of Sponsored Programs \& Research Administration at the Valdosta State University. The author was also partly supported by CODI and Estrategia de Sostenibilidad (Universidad de Antioquia, UdeA) and Colciencias-Ecopetrol (no. 0266-2013)}
\author{Jos\'e A. V\'elez-Marulanda}
\address{Department of Mathematics, Valdosta State University, Valdosta, GA, U.S.A.}
\email{javelezmarulanda@valdosta.edu (Corresponding author)}
\keywords{(Uni)versal deformation rings \and finitely generated Gorenstein-projective modules \and Nakayama algebras \and triangular matrix algebras}
\begin{document}
\renewcommand{\labelenumi}{\textup{(\roman{enumi})}}
\renewcommand{\labelenumii}{\textup{(\roman{enumi}.\alph{enumii})}}
\numberwithin{equation}{section}

\begin{abstract}
Let $\k$ be a fixed field of arbitrary characteristic, and let $\A$ be a finite dimensional $\k$-algebra. Assume that $V$ is a left $\A$-module of finite dimension over $\k$. F. M. Bleher and the author previously proved that $V$ has a well-defined versal deformation ring $R(\A,V)$ which is a local complete commutative Noetherian ring with residue field isomorphic to $\k$. Moreover, $R(\A,V)$ is universal if the endomorphism ring of $V$ is isomorphic to $\k$. In this article we prove that if $\A$ is a basic connected Nakayama algebra without simple modules and $V$ is a Gorenstein-projective left $\A$-module, then $R(\A,V)$ is universal. Moreover, we also prove that the universal deformation rings $R(\A,V)$ and $R(\A, \Omega V)$ are isomorphic, where $\Omega V$ denotes the first syzygy of $V$. This result extends the one obtained by F. M. Bleher and D. J. Wackwitz concerning universal deformation rings of finitely generated modules over self-injective Nakayama algebras.  In addition, we also prove the following result concerning versal deformation rings of finitely generated modules over triangular matrix finite dimensional algebras. Let $\Sigma=\begin{pmatrix} \A & B\\0& \G\end{pmatrix}$ be a triangular matrix finite dimensional Gorenstein $\k$-algebra with $\G$ of finite global dimension and $B$ projective as a left $\A$-module. If $\begin{pmatrix} V\\W\end{pmatrix}_f$ is a finitely generated Gorenstein-projective left $\Sigma$-module, then the versal deformation rings $R\left(\Sigma,\begin{pmatrix} V\\W\end{pmatrix}_f\right)$ and $R(\A,V)$ are isomorphic. 
\end{abstract}
\subjclass[2010]{16G10 \and 16G20 \and 20C20}
\maketitle

\section{Introduction}\label{int}
Throughout this article, we assume that $\k$ is a fixed field of arbitrary characteristic, and that all our modules are finitely generated. We denote by $\hat{\Ca}$ the category of all complete local commutative Noetherian $\k$-algebras with residue field $\k$. In particular, the morphisms in $\hat{\Ca}$ are continuous $\k$-algebra homomorphisms that induce the identity map on $\k$.  Let $\A$ be a fixed finite dimensional $\k$-algebra. For all objects $R$ in $\hat{\Ca}$, we denote by $R\A$ the tensor product of $\k$-algebras $R\otimes_\k\A$. Note that $R\A$ is an $R$-algebra, and if $R$ is an Artinian ring then $R\A$ is also Artinian (both on the left and the right sides). We denote by $R\A$-mod the category of finitely generated left $R\A$-modules.  If $\mathcal{M}$ is a class of left  $R\A$-modules, we denote by $\mathrm{add}\, \mathcal{M}$ the smallest subcategory of $R\A$-mod containing $\mathcal{M}$. In particular, for all left $R\A$-modules $X$, $\mathrm{add}\, X$ denotes the full subcategory of $R\A$-mod whose objects are direct summands of finite direct sums of copies of $X$.  We denote by $\A$-\underline{mod} the stable category of $\A$, i.e., the objects of $\A$-\underline{mod} are the same as those of $\A$-mod, and two morphisms $f,g:X\to Y$ in $\A$-\underline{mod} are identified provided that $f-g$ factors through a projective left $\A$-module. Let $V$ be a fixed left $\A$-module. We denote by $|V|$ the length of $V$ as a $\A$-module, and by $\End_\A(V)$ (resp. by $\SEnd_\A(V)$) the 
endomorphism ring (resp. the stable endomorphism ring) of $V$. We denote by $\Omega V$ or by $\Omega_\A V$ the first syzygy of $V$ as a left $\A$-module, i.e., $\Omega V$ is the kernel of a projective cover $P(V)\to V$ of $V$ over $\A$, which is unique up to isomorphism. Let $R$ be an arbitrary object in $\hat{\Ca}$. A {\it lift} of $V$ over $R$ is a left $R\A$-module $M$ that is free over $R$ together with a $\A$-module isomorphism $\phi: \k\otimes_RM\to V$.   A {\it deformation} of $V$ over $R$ is defined to be an isomorphism class of lifts of $V$ over $R$. It follows from \cite[Prop. 2.1 \& Prop. 2.5]{blehervelez} that if $V$ is a left $\A$-module, then $V$ has a well-defined versal deformation ring $R(\A,V)$, which is an object in $\hat{\Ca}$, and which also an invariant under Morita equivalence. More recently, F. M. Bleher and D. J. Wackwitz proved in \cite[Prop. 2.4]{bleher15} that if $\A$ is a Frobenius $\k$-algebra (i.e., ${_\A}\A$ and $\Hom_\k(\A{_\A},\k)$ are isomorphic as left $\A$-modules) and $V$ is non-projective, then the versal deformation rings $R(\A,V)$ and $R(\A,\Omega V)$ are isomorphic in $\hat{\Ca}$, which generalizes \cite[Thm. 2.6 (iv)]{blehervelez}. Moreover, they also proved in \cite[Thm. 1.3]{bleher15} that if $\A$ is an indecomposable $\k$-algebra that is stably Morita equivalent (in the sense of \cite{broue}) to a self-injective split basic Nakayama algebra and $V$ is further indecomposable, then $R(\A,V)$ is universal and its isomorphism class is either $\k$ or $\k[\![t]\!]/(t^2)$, or a quotient of a ring of power series in finitely many variables and which is also determined by the closets distance of the isomorphism class of $V$ to the boundary of the stable Auslander-Reiten quiver of $\A$.

Following \cite{enochs0,enochs}, we say that $V$ is {\it Gorenstein-projective} provided that there exists an acyclic complex of projective left $\A$-modules 
\begin{equation}\label{completeres}
P^\bullet: \cdots\to P^{-2}\xrightarrow{f^{-2}} P^{-1}\xrightarrow{f^{-1}} P^0\xrightarrow{f^0}P^1\xrightarrow{f^1}P^2\to\cdots
\end{equation}  
such that $\Hom_\A(P^\bullet, \A)$ is also acyclic and $V=\mathrm{coker}\,f^0$. In this situation, we say that $P^\bullet$ is a {\it complete $\A$-projective resolution}.  
We denote by $\A$-Gproj (resp. by $\A$-\underline{Gproj}) the full subcategory of $\A$-mod (resp. of $\A$-\underline{mod}) consisting of finitely generated Gorenstein-projective left $\A$-modules. It is well-known that $\A$-Gproj is a Frobenius category in the sense of \cite[Chap. I, \S 2]{happel}, and that consequently, $\A$-\underline{Gproj} is a triangulated category. 
Recall that $\A$ is said to be a {\it Gorenstein} $\k$-algebra provided that $\A$ has finite injective dimension as a left and right $\A$-module. In particular, algebras of finite global dimension as well as self-injective algebras are Gorenstein.  It follows from \cite[Thm. 1.2]{bekkert-giraldo-velez} that versal deformation rings of Gorenstein-projective modules are preserved under singular equivalence of Morita type between Gorenstein $\k$-algebras (cf.  \cite[Prop. 3.2.6]{blehervelez2}). These singular equivalences of Morita type were introduced by X. W. Chen and L. G. Sun in \cite{chensun}, and then further discussed by G. Zhou and A. Zimmermann in \cite{zhouzimm}, as a way of generalizing the concept of stable equivalence of Morita type. It was also proved in \cite[Thm. 1.2]{bekkert-giraldo-velez} that if $V$ is a Gorenstein-projective left $\A$-module with $\SEnd_\A(V)=\k$, then $R(\A,V)$ is universal, which generalizes \cite[Thm. 2.6]{blehervelez}.  On the other hand, it follows from \cite[Thm. 5.2]{bekkert-giraldo-velez} that if $\A$ is a monomial algebra in which there is no overlap (in the sense of \cite{chen-shen-zhou}) and $V$ is a  non-projective Gorenstein-projective $\A$-module, then  $R(\A,V)$ is universal and isomorphic either to $\k$ when $V\not=\Omega V$ or to $\k[\![t]\!]/(t^2)$  otherwise.  All of these facts raise the question of for which other finite dimensional $\k$-algebras every indecomposable non-projective Gorenstein-projective $\A$-module $V$, the versal deformation ring $R(\A, V)$ is universal.

Recall that $\A$ is a {\it Nakayama} algebra provided that all indecomposable projective and all indecomposable injective $\A$-modules are uniserial, or equivalently, all indecomposable $\A$-modules are uniserial.   It follows from e.g. \cite[Thm.V.3.2]{assem3}, that $\A$ is a basic and connected Nakayama $\k$-algebra with exactly $s(\A)=s\geq 1$ isomorphism classes of indecomposable simple left $\A$-modules and without simple projective objects if and only if $\A$ is of the form $\k Q/I$, where $Q$ is the quiver
\begin{align}\label{cyclequiver}
Q:& \xymatrix@1@=40pt{ \underset{0}{\bullet} \ar[r]^{\alpha_0} & \underset{1}{\bullet} \ar[r]^{\alpha_1} &\underset{2}{\bullet}\ar@{.}[r]&\underset{s-2}{\bullet}\ar[r]^{\alpha_{s-2}} &  \underset{s-1}{\bullet}  \ar@/^2pc/[llll]^{\alpha_{s-1}} }, 
\end{align}
and $I$ is an admissible ideal of $\k Q$.

Let $\A$ be a basic connected Nakayama $\k$-algebra without simple projective modules. In \cite{ringel}, C. M. Ringel provided a description of the Gorenstein-projective modules over $\A$. Moreover, he also proved the following result (see \cite[Prop. 1]{ringel}). Let $\mathscr{C}(\A)$ be the category of left $\A$-modules such that the indecomposable objects are all the non-projective Gorenstein-projective indecomposable left $\A$-modules as well as their projective covers. Then $\mathscr{C}(\A)$ is a full exact abelian subcategory of $\A$-mod which is closed under extensions and projective covers and which is also equivalent to the abelian category $\A'$-mod, where $\A'$ is a basic connected self-injective Nakayama $\k$-algebra. Ringel calls $\mathscr{C}(\A)$ the {\it Gorenstein core} of $\A$.  Let $\mathscr{E}(\A)$ be the class of non-zero indecomposable Gorenstein-projective left $\A$-modules $E$ such that no proper non-zero factor module of $E$ is a Gorenstein-projective left $\A$-module. Then the objects in $\mathscr{E}(\A)$ are the simple objects in $\mathscr{C}(\A)$, which Ringel call the {\it elementary} Gorenstein-projective modules of $\A$. It follows from \cite[Prop. 2 (a)]{ringel} that every non-zero object $V$ in $\mathscr{C}(\A)$ has a filtration with composition factors in $\mathscr{E}(\A)$ and thus $\mathscr{C}(\A)$ is an abelian length category in the sense of \cite{gabriel3}. Note that since $\mathscr{C}(\A)$ is also an exact Krull-Schmidt category, we can talk about the Auslander-Reiten quiver of $\mathscr{C}(\A)$ (see \cite[\S 2.3]{ringel2}). Let $E_1,\ldots,E_g$ be a complete list of isomorphism-class representatives in $\mathscr{E}(\A)$, and for all $1\leq i\leq g$, let $P(E_i)$ be the projective $\A$-module cover of $E_i$. Assume that $s(\A)<|P(E_i)|$ for all $1\leq i\leq g$. By \cite[Prop. 2 (b)]{ringel}, $s(\A)=\sum_{i=1}^g|E_i|$, and by the arguments in \cite[pg. 252]{ringel}, $s(\A)$ divides $\sum_{i=1}^g|P(E_i)|$. Let $V$ be a non-zero indecomposable object in $\mathscr{C}(\A)$. Since $V$ has a unique composition series as a $\A$-module, then it has a unique composition series inside the category $\mathscr{C}(\A)$. 
In order to give an explicit description of the versal deformation rings of the indecomposable non-projective Gorenstein-projective left $\A$-modules, we need the following definition from \cite[Def. 1.1 (b)]{bleher5}.
\begin{definition}\label{defi0}
For all integers $n\geq 1$, we denote by $N_n$ the $n\times n$ matrix with entries in the power series algebra $\k[\![t_1,\ldots,t_n]\!]$ defined by 
\begin{equation*}
N_n=\begin{pmatrix}0&\cdots&0&t_n\\&&&t_{n-1}\\&I_{n-1}&&\vdots\\&&&t_1\end{pmatrix}
\end{equation*}
where $I_{n-1}$ is the $(n-1)\times (n-1)$ identity matrix. In particular, $N_1=(t_1)$. For all integers $a\geq 0$ and $n\geq 1$, define $J_n(a)$ to be the ideal of $\k[\![t_1,\ldots,t_n]\!]$ generated by the entries of $(N_n)^a$. If $n=0$, then we let $J_0(a)$ to be the zero-ideal of $\k$. 
\end{definition}

Assume that $\mathscr{C}(\A)\not=0$. The first goal of this note is to prove the following result which shows that every versal deformation ring $R(\A,V)$ of a non-projective indecomposable object $V$ in $\mathscr{C}(\A)$ is universal and stable under syzygies. Moreover, it also provides an explicit description of $R(\A,V)$ as a quotient algebra of a ring of power series over $\k$. 

\begin{theorem}\label{thm1}
Let $\A$ be a basic connected Nakayama $\k$-algebra without simple projective modules and with $\mathscr{C}(\A)\not=0$. Let $E_1,\ldots, E_g$ be a complete list of isomorphism-class representatives in $\mathscr{E}(\A)$. For all $1\leq i\leq g$, let $|P(E_i)|$ be the length of the projective $\A$-module cover of $E_i$,  and assume that $s(\A)<|P(E_i)|$.
 \begin{enumerate}
\item There exists a basic connected self-injective Nakayama $\k$-algebra $\A'$ and a left $\A'$-module $V'$ such that the versal deformation ring $R(\A,V)$ is isomorphic to $R(\A', V')$ in $\hat{\Ca}$. In particular, $R(\A,V)$ is also universal and isomorphic to $R(\A,\Omega V)$ in $\hat{\Ca}$.
\item Let $\ell_{\mathscr{C}(\A)}$ be the integer
\begin{equation*}
\ell_{\mathscr{C}(\A)}=\frac{1}{s(\A)}\sum_{i=1}^g|P(E_i)|.
\end{equation*}
Let $\mu, \ell' \geq 0$ be integers such that $\ell'\leq g-1$ and 
\begin{equation*}
\ell_{\mathscr{C}(\A)}=\mu g+\ell'.
\end{equation*}
Let $V$ be an indecomposable non-projective object in $\mathscr{C}(\A)$ and denote by $d_{\mathscr{C}(\A),V}$ the distance of the isomorphism class of $V$ to the closest boundary of the stable Auslander-Reiten quiver of $\mathscr{C}(\A)$, and let $\ell_{\mathscr{C}(\A),V}=d_{\mathscr{C}(\A),V}+1$. Write $\ell_{\mathscr{C}(\A),V}$ as 
\begin{equation*}
\ell_{\mathscr{C}(\A),V}=ng+i,
\end{equation*}
where $n,i\geq 0$ and $i\leq g-1$. Then the universal deformation ring $R(\A,V)$ is isomorphic in $\hat{\Ca}$ to $\k[\![t_1,\ldots,t_n]\!]/J_n(m_V)$, where 
\begin{equation*}
m_V=\begin{cases} \mu, &\text{ if $0\leq i\leq \ell'$,}\\
\mu-1, &\text{otherwise,}\end{cases}
\end{equation*}
where $J_n(m_V)$ is as in Definition \ref{defi0}. 
\end{enumerate} 
\end{theorem}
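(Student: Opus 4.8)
The plan is to transport the entire deformation problem from $\A$ onto the self-injective Nakayama algebra that Ringel attaches to the Gorenstein core, and then to read off the answer from the known description of versal deformation rings over self-injective Nakayama algebras. To set this up I would first recall from \cite[Prop.~1]{ringel} the existence of a basic connected self-injective Nakayama $\k$-algebra $\A'$ and an exact equivalence $\mathbf{F}\colon\mathscr{C}(\A)\xrightarrow{\ \sim\ }\A'\text{-mod}$ carrying $E_1,\dots,E_g$ to the simple $\A'$-modules and the projective objects of $\mathscr{C}(\A)$ to the projective $\A'$-modules; concretely one may take $P=\bigoplus_{i=1}^{g}P(E_i)$, $\A'=\End_\A(P)^{\mathrm{op}}$ and $\mathbf{F}=\Hom_\A(P,-)$, with quasi-inverse $P\otimes_{\A'}-$. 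Since $\mathscr{C}(\A)$ is a full exact abelian subcategory of $\A$-mod closed under projective covers, and since the syzygy $\Om_\A V$ of our indecomposable non-projective object $V$ is again an indecomposable non-projective Gorenstein-projective module (uniserial, non-zero and of infinite projective dimension because $V$ is non-projective) and hence lies in $\mathscr{C}(\A)$, the projective cover of $V$ taken in $\mathscr{C}(\A)$ coincides with $P(V)$ and $\Om_\A V$ is computed inside $\mathscr{C}(\A)$; consequently $\mathbf{F}$ sends the non-projective indecomposable $V$ to a non-projective indecomposable $V':=\mathbf{F}(V)$ and sends $\Om_\A V$ to $\Om_{\A'}V'$.

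The heart of the argument, and the step I expect to cost the most work, is to promote the $\k$-linear equivalence $\mathbf{F}$ to the level of lifts: I want to show that for every $R$ in $\hat{\Ca}$ the functor $\Hom_{R\A}(R\otimes_\k P,-)$ carries lifts of $V$ over $R$ to lifts of $V'=\Hom_\A(P,V)$ over $R\A'=R\otimes_\k\End_\A(P)^{\mathrm{op}}$, with inverse $(R\otimes_\k P)\otimes_{R\A'}-$, and that these assignments are mutually inverse on isomorphism classes, thereby inducing an isomorphism of deformation functors $\hat{\Ca}\to\Sets$ and hence an isomorphism $R(\A,V)\cong R(\A',V')$ in $\hat{\Ca}$. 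The mechanics should be standard: for a lift $M$ of $V$ over $R$, the module $R\otimes_\k P$ is projective over $R\A$ and $\Hom_{R\A}(R\otimes_\k P,M)\cong\Hom_\A(P,M|_\A)$ by adjunction along $\A\to R\A$; using that $M$ is free over $R$, exactness of $\Hom_\A(P,-)$ on the $\m_R$-adic filtration of $M$, the identification $\Hom_{R\A}(R\otimes_\k P,M)/\m_R\cong\Hom_\A(P,V)=V'$, and Nakayama's lemma, one checks $\Hom_{R\A}(R\otimes_\k P,M)$ is free over $R$ and is a lift of $V'$; and the fact that the unit and counit of the adjunction $(\Hom_\A(P,-),P\otimes_{\A'}-)$ are isomorphisms on $\mathscr{C}(\A)$ and on $\A'$-mod, together with a reduction modulo $\m_R$ and Nakayama, shows the two operations are mutually inverse. (Alternatively, $\mathbf{F}$ descends to a triangle equivalence $\A\text{-}\underline{\mathrm{Gproj}}\simeq\A'\text{-}\underline{\mathrm{mod}}$ and one could try to argue this is of singular Morita type so that \cite[Thm.~1.2]{bekkert-giraldo-velez} applies, but the direct computation above is cleaner and avoids verifying the Morita-type finiteness conditions.)

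Granting $R(\A,V)\cong R(\A',V')$, part (i) follows quickly: $\A'$ is basic self-injective, hence Frobenius, and $V'$ is non-projective, so $R(\A',V')$ is universal by \cite[Thm.~1.3]{bleher15} and isomorphic to $R(\A',\Om_{\A'}V')$ by \cite[Prop.~2.4]{bleher15}; applying the functor isomorphism again to $\Om_\A V\in\mathscr{C}(\A)$ and using $\mathbf{F}(\Om_\A V)\cong\Om_{\A'}V'$ gives $R(\A,V)\cong R(\A',V')\cong R(\A',\Om_{\A'}V')\cong R(\A,\Om_\A V)$. For part (ii) I would apply the explicit description of \cite[Thm.~1.3]{bleher15} (built on \cite{bleher5} and Definition~\ref{defi0}) to $\A'$ and $V'$: it identifies $R(\A',V')$ with $\k[\![t_1,\dots,t_n]\!]/J_n(m)$, where $n$ and $i$ come from writing $d+1=ng+i$ with $d$ the distance of $[V']$ to the closest boundary of the stable Auslander--Reiten quiver of $\A'$, and $m$ is determined by the residue modulo $g$ of the common length $L'$ of the indecomposable projective $\A'$-modules. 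It then remains to match invariants. Since $\mathbf{F}$ is an equivalence it induces an isomorphism of stable Auslander--Reiten quivers, so $d=d_{\mathscr{C}(\A),V}$ and $d+1=\ell_{\mathscr{C}(\A),V}=ng+i$; the number of simple $\A'$-modules is $g$; and the most delicate bookkeeping point is that $L'=\ell_{\mathscr{C}(\A)}=\frac{1}{s(\A)}\sum_{i=1}^{g}|P(E_i)|$, which I would prove by writing $n_{ij}$ for the multiplicity of $E_j$ among the composition factors of $P(E_i)$ inside $\mathscr{C}(\A)$ (equivalently, the $(i,j)$-entry of the Cartan matrix of $\A'$): the uniserial structure of the indecomposable projective $\A'$-modules forces $\sum_j n_{ij}=\sum_i n_{ij}=L'$, whence $\sum_i|P(E_i)|=\sum_i\sum_j n_{ij}|E_j|=L'\sum_j|E_j|=L'\,s(\A)$ using $\sum_j|E_j|=s(\A)$ from \cite[Prop.~2]{ringel}. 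Writing $\ell_{\mathscr{C}(\A)}=\mu g+\ell'$ then turns Bleher's exponent $m$ into the stated $m_V$ — equal to $\mu$ when $0\le i\le\ell'$ and to $\mu-1$ otherwise — completing the proof.
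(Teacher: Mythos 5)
Your proposal is correct and follows essentially the same route as the paper: you take the projective generator $P=\bigoplus_{i=1}^g P(E_i)$ of the Gorenstein core (which is the paper's $P_{\mathscr{X}(\A)}$, since the $P(E_i)$ are pairwise non-isomorphic and exhaust the indecomposable projectives with top in $\mathscr{X}(\A)$), promote the Morita equivalence $\Hom_\A(P,-)\colon\mathscr{C}(\A)\to\A'\text{-mod}$ to a natural bijection on deformation sets over each Artinian $R\in\hat{\Ca}$, and then invoke the Bleher--Wackwitz classification for the self-injective Nakayama algebra $\A'$. Your Cartan-matrix derivation of $\ell\ell(\A')=\tfrac{1}{s(\A)}\sum_{i=1}^g|P(E_i)|$ is a self-contained substitute for the equality the paper simply cites from Ringel, but the overall architecture of the argument coincides with the paper's Lemma \ref{lemma31}, Proposition \ref{lemma3.1}, and Proposition \ref{cor3.4}.
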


Observe that Theorem \ref{thm1} extends the results of \cite[Thm. 1.3]{bleher15} to indecomposable non-projective Gorenstein-projective modules over arbitrary basic connected Nakayama algebras without simple projective modules.

Recall that  $\Sigma$ is a finite dimensional triangular matrix $\k$-algebra if $\Sigma$ is of the form 
\begin{equation}\label{triangalgebra}
\Sigma=\begin{pmatrix} \A & B\\0 & \G\end{pmatrix}, 
\end{equation}
where $\A$ and $\G$ are finite dimensional $\k$-algebras and $B$ is a $\A$-$\G$-bimodule. Recall also that the product in $\Sigma$ is induced by the usual product of $2\times 2$ square matrices. For a brief description of the module category of finitely generated modules over triangular matrix algebras, see \S \ref{ss3}.  In \cite[Thm. 1.4]{zhang}, P. Zhang provided the following explicit description of the Gorenstein-projective left $\Sigma$-modules for when $B$ is a compatible $\A$-$\G$-bimodule (see \cite[Def. 1.1]{zhang}). A left $\Sigma$-module $\begin{pmatrix} V\\W\end{pmatrix}_f$ is Gorenstein-projective if and only if $f:B\otimes_\G W\to V$ is injective, $W$ is a Gorenstein-projective left $\G$-module, and $\mathrm{coker}\, f$ is also a Gorenstein-projective left $\A$-module. In this situation $V$ is a Gorenstein-projective left $\A$-module if and only if $B\otimes_\G W$ is also a Gorenstein-projective left $\A$-module. Consequently, if $\Sigma$ is Gorenstein and $\G$ has finite global dimension, then the triangulated categories $\Sigma$-\underline{Gproj} and $\A$-\underline{Gproj} are equivalent (see \cite[Cor. 2.7 (i)]{zhang}).  
On the other hand, it follows by \cite[Thm. 2.2 (i)]{xiong-zhang} that if $B$ has finite projective dimension as a left $\A$-module, then $\Sigma$ is Gorenstein if and only if $\A$ and $\G$ are both Gorenstein and $B$ has finite projective dimension as a right $\G$-module.

The second goal of this note is to apply \cite[Cor. 2.7 (i)]{zhang} to prove the following result concerning versal deformation rings of Gorenstein-projective modules over finite dimensional Gorenstein triangular matrix $\k$-algebras.  

\begin{theorem}\label{thm2}
Let $\Sigma$ be a Gorenstein triangular matrix $\k$-algebra as in (\ref{triangalgebra}) with $B$ projective as a left $\A$-module and with $\G$ of finite global dimension. Let $\begin{pmatrix} V\\W\end{pmatrix}_f$ be a Gorenstein-projective left $\Sigma$-module. Then the versal deformation rings $R\left(\Sigma, \begin{pmatrix} V\\W\end{pmatrix}_f\right)$ and $R(\A,V)$ are isomorphic in $\hat{\Ca}$.
\end{theorem}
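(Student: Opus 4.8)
The plan is to reduce the comparison of versal deformation rings to the equivalence of stable categories of Gorenstein-projective modules furnished by \cite[Cor. 2.7 (i)]{zhang}, and then invoke the fact that versal deformation rings of Gorenstein-projective modules are invariant under such triangulated equivalences. More precisely, since $B$ is projective as a left $\A$-module, it has finite (indeed zero) projective dimension as a left $\A$-module, so by \cite[Thm. 2.2 (i)]{xiong-zhang} the hypothesis that $\Sigma$ is Gorenstein forces $\A$ and $\G$ to be Gorenstein and $B$ to have finite projective dimension as a right $\G$-module. In particular $B$ is a compatible $\A$-$\G$-bimodule in the sense of \cite[Def. 1.1]{zhang}, so P. Zhang's description applies: the left $\Sigma$-module $\begin{pmatrix} V\\W\end{pmatrix}_f$ being Gorenstein-projective means $f\colon B\otimes_\G W\to V$ is injective, $W$ is Gorenstein-projective over $\G$, and $\mathrm{coker}\,f$ is Gorenstein-projective over $\A$. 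Because $\G$ has finite global dimension, every Gorenstein-projective left $\G$-module is projective, so $W$ is projective; hence $B\otimes_\G W$ is projective as a left $\A$-module (as $B$ is), and therefore $V$ itself is Gorenstein-projective over $\A$ (this is the "in this situation" clause of \cite[Thm. 1.4]{zhang}, or simply: $V$ is an extension of the Gorenstein-projective module $\mathrm{coker}\,f$ by the projective module $B\otimes_\G W$).

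Next I would make the equivalence $G\colon \Sigma\text{-}\underline{\mathrm{Gproj}}\xrightarrow{\ \sim\ }\A\text{-}\underline{\mathrm{Gproj}}$ from \cite[Cor. 2.7 (i)]{zhang} explicit on objects: it sends $\begin{pmatrix} V\\W\end{pmatrix}_f$ to $\mathrm{coker}\,f$, or equivalently — since $B\otimes_\G W$ is projective in our setting — to $V$ itself in the stable category. Thus $G\left(\begin{pmatrix} V\\W\end{pmatrix}_f\right)\cong V$ in $\A\text{-}\underline{\mathrm{Gproj}}$. By \cite[Thm. 1.2]{bekkert-giraldo-velez} (the statement recalled in the introduction that versal deformation rings of Gorenstein-projective modules are preserved under singular equivalences of Morita type between Gorenstein algebras, together with \cite[Prop. 3.2.6]{blehervelez2}), the versal deformation ring of a Gorenstein-projective module depends, up to isomorphism in $\hat{\Ca}$, only on its image in the stable category $\A\text{-}\underline{\mathrm{Gproj}}$. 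Applying this with the equivalence $G$ yields $R\left(\Sigma,\begin{pmatrix} V\\W\end{pmatrix}_f\right)\cong R(\A, G(\begin{pmatrix} V\\W\end{pmatrix}_f))\cong R(\A,V)$ in $\hat{\Ca}$, which is the claim.

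The main obstacle is the precise functoriality and naturality statement needed in the last step: one must know not merely that $\Sigma\text{-}\underline{\mathrm{Gproj}}$ and $\A\text{-}\underline{\mathrm{Gproj}}$ are equivalent as triangulated categories, but that this equivalence is induced by (or compatible with) a singular equivalence of Morita type, so that the invariance result of \cite[Thm. 1.2]{bekkert-giraldo-velez} literally applies. I would address this by checking that the functor $\begin{pmatrix} V\\W\end{pmatrix}_f\mapsto \mathrm{coker}\,f$ is realized by tensoring with an appropriate $\A$-$\Sigma$-bimodule complex (coming from the projection $\Sigma\to\Sigma/(\text{ideal of }\G\text{-part})$ and the projectivity of $B$), and that, conversely, the quasi-inverse is of the same form; the compatibility conditions in the definition of a singular equivalence of Morita type then follow from $B$ having finite projective dimension on both sides and $\G$ having finite global dimension. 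Alternatively, if one prefers to avoid reproving the Morita-type statement, one can work directly with lifts and deformations: a lift of $\begin{pmatrix} V\\W\end{pmatrix}_f$ over $R\in\hat{\Ca}$ is, by the description of modules over $R\Sigma=\begin{pmatrix} R\A & RB\\0& R\G\end{pmatrix}$ recalled in \S\ref{ss3}, the same data as a lift of $V$ over $R\A$ together with a lift of the (projective, hence rigid) module $W$ over $R\G$ and a compatible lift of the map $f$; since $W$ is projective its deformation functor is trivial and $f$ extends uniquely, giving a natural isomorphism of deformation functors $\Def_\Sigma\!\left(\begin{pmatrix} V\\W\end{pmatrix}_f, -\right)\cong \Def_\A(V,-)$ on $\hat{\Ca}$, whence the versal deformation rings coincide.
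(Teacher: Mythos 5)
Your primary route runs into the issue you yourself flag: \cite[Thm. 1.2]{bekkert-giraldo-velez} is stated for \emph{singular equivalences of Morita type} between Gorenstein algebras, whereas \cite[Cor. 2.7(i)]{zhang} only hands you a triangulated equivalence $\Sigma\text{-}\underline{\mathrm{Gproj}}\simeq\A\text{-}\underline{\mathrm{Gproj}}$. Passing from the latter to the former is a genuinely nontrivial upgrade --- you would need to exhibit the bimodule (complexes) and verify the compatibility axioms --- and your proposed patch is only sketched, not carried out. As written, therefore, the first route has a real gap and the citation does not apply directly.

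The alternative you offer in the final paragraph, however, is precisely the paper's proof. The paper records the decomposition of lifts as Lemma \ref{lemma3.7} (a lift of $\begin{pmatrix} V\\W\end{pmatrix}_f$ over $R$ is exactly a pair of lifts $(M,\phi_M)$, $(N,\phi_N)$ of $V$ and $W$ together with a compatible $g\colon B_R\otimes_{R\G}N\to M$), uses that $W$ is projective (since $\G$ has finite global dimension) and $R(\G,W)\cong\k$ to fix $N=W_R$, and then shows the assignment $\bigl[\begin{pmatrix} M\\ W_R\end{pmatrix}_g,\cdot\bigr]\mapsto[M,\phi_M]$ gives a natural bijection $m_{i^!,R}$ of deformation sets, concluding by continuity. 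One small correction to your phrasing: ``$f$ extends uniquely'' is an overstatement --- the lift $g\colon B_R\otimes_{R\G}W_R\to M$ of $f$ exists because the source is projective over $R\A$, but it is not unique as a map; the real content (and what the paper's bijectivity claim for $m_{i^!,R}$ amounts to) is that different choices of $g$ yield isomorphic lifts of $\begin{pmatrix} V\\W\end{pmatrix}_f$, so that the induced map on deformation classes is well-defined and bijective. You should say that explicitly rather than claim uniqueness of the extension.
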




This note is meant to be a continuation of \cite{bekkert-giraldo-velez} and is organized as follows. In \S\ref{sec2}, we review the preliminary results concerning lifts and deformations of finitely generated modules in the sense of \cite{blehervelez} as well as the description of finitely generated Gorenstein-projective modules over Nakayama and triangular matrix algebras as given in \cite{ringel} and \cite{zhang}, respectively. In \S\ref{sec3}, we give the proofs of Theorems \ref{thm1} and \ref{thm2}. We refer the reader to look at e.g. \cite{chen-shen-zhou, holmH} (and their references) for basic concepts concerning Gorenstein-projective modules. For basic concepts from the representation theory of algebras such as projective covers, syzygies of modules, stable categories and homological dimension of modules over finite dimensional algebras, we refer the reader to \cite{assem3,auslander,curtis,weibel}.

\section{Preliminaries}\label{sec2}
Let $\k$ and $\hat{\Ca}$ be as in \S \ref{int}. Let $\A$ be a finite dimensional $\k$-algebra, and let $V$ be a left $\A$-module. 

\subsection{Lifts, deformations, and (uni)versal deformation rings}\label{sec21}

Let $R$ be a ring in $\hat{\Ca}$. Following \cite{blehervelez}, a {\it lift} $(M,\phi)$ 
of $V$ over $R$ is a left $R\A$-module $M$ 
that is free over $R$ 
together with an isomorphism of $\A$-modules $\phi:\k\otimes_RM\to V$. Two lifts $(M,\phi)$ and $(M',\phi')$ over $R$ are {\it isomorphic} 
if there exists an $R\A$-module 
isomorphism $f:M\to M'$ such that $\phi'\circ (\mathrm{id}_\k\otimes_R f)=\phi$.
If $(M,\phi)$ is a lift of $V$ over $R$, we  denote by $[M,\phi]$ its isomorphism class and say that $[M,\phi]$ is a {\it deformation} of $V$ 
over $R$. We denote by $\Def_\A(V,R)$ the 
set of all deformations of $V$ over $R$. The {\it deformation functor} over $V$ is the 
covariant functor $\hat{\Fun}_V:\hat{\Ca}\to \Sets$ defined as follows: for all rings $R$ in $\hat{\Ca}$, define $\hat{\Fun}_V(R)=\Def_
\A(V,R)$, and for all morphisms $\theta:R\to 
R'$ in $\hat{\Ca}$, 
let $\hat{\Fun}_V(\theta):\Def_\A(V,R)\to \Def_\A(V,R')$ be defined as $\hat{\Fun}_V(\theta)([M,\phi])=[R'\otimes_{R,\theta}M,\phi_\theta]$, 
where $\phi_\theta: \k\otimes_{R'}
(R'\otimes_{R,\theta}M)\to V$ is the composition of $\A$-module isomorphisms 
\[\k\otimes_{R'}(R'\otimes_{R,\theta}M)\cong \k\otimes_RM\xrightarrow{\phi} V.\]  

Following \cite[\S 2.6]{sch}, we call the set $\hat{\Fun}_V(\k[\![t]\!]/(t^2))$ the tangent space of $\hat{\Fun}_V$, which has an structure of a $\k$-vector space by \cite[Lemma 2.10]{sch}. 

Suppose there exists a ring $R(\A,V)$ in $\hat{\Ca}$ and a deformation $[U(\A,V), \phi_{U(\A,V)}]$ of $V$ over $R(\A,V)$ with the 
following property. For each $R$ in $\hat{\Ca}$ and for all deformations  $[M,\phi]$ of $V$ over $R$, there exists a morphism $\psi_{R(\A,V), R, [M,\phi]}:R(\A,V)\to R$ 
in $\hat{\Ca}$ such that 
\[\hat{\Fun}_V(\psi_{R(\A,V), R, [M,\phi]})[U(\A,V), \phi_{U(\A,V)}]=[M,\phi],\]
and moreover, $\psi_{R(\A,V), R,[M,\phi]}$ is unique if $R$ is the ring of dual numbers $\k[\epsilon]$ with $\epsilon^2=0$.  Then $R(\A,V)$ and $
[U(\A,V),\phi_{U(\A,V)}]$ are called  the {\it versal deformation ring} and {\it versal deformation} of $V$, respectively. If the morphism $
\psi_{R(\A,V),R,[M,\phi]}$ is unique for all rings $R$ in $\hat{\Ca}$ and deformations  $[M,\phi]$ of $V$ over $R$, then $R(\A,V)$ and $[U(\A,V),\phi_{U(\A,V)}]$ are 
called the {\it universal deformation ring} and the {\it universal deformation} of $V$, respectively.  In other words, the universal deformation 
ring $R(\A,V)$ represents the deformation functor $\hat{\Fun}_V$ in the sense that $\hat{\Fun}_V$ is naturally isomorphic to the $\Hom$ 
functor $\Hom_{\hat{\Ca}}(R(\A,V),-)$. Using Schlessinger's criteria \cite[Thm. 2.11]{sch} and using methods similar to those in \cite{mazur}, 
it is straightforward to prove that the deformation functor $\hat{\Fun}_V$ is continuous (see \cite[\S 14]{mazur} for the definition), that there exists an isomorphism of $\k$-vector spaces 
\begin{equation}\label{hoch}
\hat{\Fun}_V(\k[\![t]\!]/(t^2))\to \Ext_\A^1(V,V),
\end{equation}
that $V$ has a versal deformation ring which is an invariant under Morita equivalence (see \cite[Prop. 2.5]
{blehervelez}), and that this versal deformation ring is universal provided that the 
endomorphism ring $\End_\A(V)$ is isomorphic to $\k$ (see \cite[Prop. 2.1]{blehervelez}). 

\begin{remark}\label{univ}
\begin{enumerate}
\item If follows from the isomorphism of $\k$-vector spaces (\ref{hoch}) (see also  \cite[Remark 2.1]{bleher15}) that if $\Ext_\A^1(V,V)\cong \k$, then the versal deformation ring $R(\A,V)$ is universal and isomorphic to $\k$. In particular, if $P$ is a projective left $\A$-module, then $R(\A,P)$ is universal and isomorphic to $\k$. It also follows from (\ref{hoch}) that if $\Ext_\A^1(V,V)\cong \k$, then the versal deformation $R(\A,V)$ is isomorphic to a quotient $\k$-algebra of $\k[\![t]\!]$. 
\item It was proved in \cite[Thm. 1.2 (ii)]{bekkert-giraldo-velez} that if $V$ is a Gorenstein-projective left $\A$-module whose stable endomorphism ring $\SEnd_\A(V)$ is isomorphic to $\k$, then $R(\A,V)$ is universal. This generalizes \cite[Thm. 2.6 (ii)]{blehervelez}.
\item If $\A$ is a Frobenius $\k$-algebra and $V$ is non-projective, then it follows from \cite[Prop. 2.4]{bleher15} that the versal deformation rings $R(\A,V)$ and $R(\A,\Omega V)$ are isomorphic in $\hat{\Ca}$. This generalizes \cite[Thm. 2.6 (iv)]{blehervelez}. 
\end{enumerate}
\end{remark}

\begin{remark}\label{ProjR}
Let $R$ be an Artinian ring in $\hat{\Ca}$, let $\iota_R:\k\to R$ be the unique morphism in $\hat{\Ca}$ endowing $R$ with a $\k$-algebra structure, and let $\pi_R:R\to \k$ be the natural projection in $\hat{\Ca}$. Then $\pi_R\circ \iota_R =\mathrm{id}_\k$. 
\begin{enumerate}
\item For all projective left $\A$-modules $P$, we let $P_R=R\otimes_{\k,\iota_R}P$. Then $P_R$ is a projective left $R\A$-module cover of $P$, and $(P_R, \pi_{R,P})$ is a lift of $P$ over $R$, where $\pi_{R,P}$ is the natural isomorphism $\k\otimes_{R, \pi_R}(R\otimes_{\k,\iota_R}P)\to P$.
\item Let $\epsilon: P(V)\to V$ be a projective left $\A$-module cover of $V$ (which is unique up to isomorphism) and let $(M, \phi)$ be a lift of $V$ over $R$. By (i), we have that $P_R(V)=R\otimes_{\k, \iota_R}P(V)$ is a projective left  $R\A$-module cover of $P(V)$. Since $\epsilon$ is an essential epimorphism, there exists an epimorphism of $R\A$-modules $\alpha_R: P_R(V)\to M$ such that $\phi\circ (\mathrm{id}_\k\otimes\alpha_R)= \epsilon\circ \pi_{R,P(V)}$. Let  $\Omega_RM=\ker \alpha$. Note that since $M$ and $P_R(V)$ are both free over $R$, then $\Omega_R(M)$ is also free over $R$. Moreover,  there exists an isomorphism of left $\A$-modules $\Omega_R(\phi):\k\otimes_R\Omega_R(M)\to \Omega V$ such that $\pi_{R,P(V)}\circ \mathrm{id}_\k\otimes\beta_R=\iota_V\circ \Omega_R(\phi)$, where $\iota_V:\Omega V\to P(V)$ and $\beta_R:\Omega_R(M)\to P_R(V)$ are the natural inclusions. In particular $(\Omega_R(M), \Omega_R(\phi))$ is a lift of $\Omega V$ over $R$. 
\end{enumerate}  
\end{remark}

\subsection{Gorenstein-projective modules over Nakayama algebras}\label{nakayama}
 

Let $\A$, $\mathscr{C}(\A)$ and $\mathscr{E}(\A)=\{E_1,\ldots,E_g\}$  be as in the hypothesis of Theorem \ref{thm1}, and let $\mathscr{X}(\A)$ be the class of simple left $\A$-modules $S$ such that the projective $\A$-module cover $P(S)$ of $S$ belongs to $\mathscr{C}(\A)$. Let $\tau_\A$ and $\Gamma(\A)$ denote the Auslander-Reiten translation and the Auslander-Reiten quiver of $\A$, respectively, and let $\tau_\A^-\mathscr{X}(\A)=\{\tau_\A^-S\,|\, S\in \mathscr{X}(\A)\}$. By \cite[Prop. 3]{ringel}, $V\not=0$ is in $\mathscr{C}(\A)$ if and only if $\mathrm{top}\, V$ is in $\mathrm{add}\,\mathscr{X}(\A)$ and $\mathrm{soc}\, V$ is in $\mathrm{add}\,\tau_\A^-\mathscr{X}(\A)$ if and only if $\mathrm{top}\,V$ and $\mathrm{top}\,\Omega V$ are both in $\mathrm{add}\, \mathscr{X}(\A)$. 

\begin{remark}\label{rem0}
By using \cite[Prop. 3]{ringel} mentioned above, the Auslander-Reiten quiver of $\mathscr{C}(\A)$ can be obtained from $\Gamma(\A)$ by deleting the rays in $\Gamma(\A)$ consisting of the indecomposable $\A$-modules with top not in $\mathscr{X}(\A)$, as well as the corays consisting of the indecomposable $\A$-modules with socle not in $\tau_\A^-\mathscr{X}(\A)$. 
\end{remark}

By \cite[Prop. 4]{ringel}, a non-zero left $\A$-module $V$ is Gorenstein-projective if and only if there exists an exact sequence of left $\A$-modules
\begin{equation*}
0\to V\to P_{n-1}\to \cdots \to P_0\to V\to 0, 
\end{equation*} 
where each $P_i$ is a {\it minimal projective} left $\A$-module, i.e., no proper non-zero submodule of $P_i$ is projective.  

Let
\begin{align}\label{projective-generator}
\begin{cases}
\begin{matrix}
P_{\mathscr{X}(\A)}=\bigoplus_{S\in \mathscr{X}(\A)}P(S),&\\\\\A'=\End_\A(P_{\mathscr{X}(\A)})^\text{op},& \text{and}\\\\
H_{\mathscr{X}(\A)}(-)=\Hom_\A(P_{\mathscr{X}(\A)},-): \A\textup{-mod}\to \A'\textup{-mod}.&
\end{matrix}
\end{cases}
\end{align}

\begin{remark}\label{rem00}
By \cite[\S II.1]{auslander}, the left $\A$-module $P_{\mathscr{X}(\A)}$ is also a right $\A'$-module. Moreover, $P_{\mathscr{X}(\A)}$ is a projective generator for $\mathscr{C}(\A)$, i.e., for all objects $V$ in $\mathscr{C}(\A)$ there exists an epimorphism of $\A$-modules $P'\to V$, where $P'$ is an object in $\mathrm{add}\, P_{\mathscr{X}(\A)}$, and thus $H_{\mathscr{X}(\A)}(-)$ restricted to $\mathscr{C}(\A)$ induces an equivalence of categories $\mathscr{C}(\A)\cong \A'\textup{-mod}$ whose quasi-inverse is given by $P_{\mathscr{X}(\A)}\otimes_{\A'}-$. It follows from \cite[Prop. 1]{ringel} that $\A'$ is also a basic connected self-injective Nakayama $\k$-algebra  (cf. \cite[Prop. 3.8]{shen}). Moreover, by \cite[p. 252]{ringel} we obtain that 
$\A'$ has exactly $e=e(\A')=g$ isomorphism classes of simple left $\A'$-modules and the length $\ell\ell(\A')$ of the indecomposable projective left $\A'$-modules (i.e., the Loewy length of $\A'$) is given by 
\begin{equation}\label{lengthself}
\ell\ell(\A')=\frac{1}{s}\sum_{i=1}^g|P(E_i)|, 
\end{equation} 
where as before, $s=s(\A)$ is the number of isomorphism classes of simple left $\A$-modules (see \cite[p. 252]{ringel} for more details). Therefore if $\ell_{\mathscr{C}(\A)}$ is as in Theorem \ref{thm1} (ii), then $\ell_{\mathscr{C}(\A)}=\ell\ell(\A')$. Moreover, if $V$ is an indecomposable non-projective object in $\mathscr{C}(\A)$, and $d_{\mathscr{C}(\A),V}$ is as in Theorem \ref{thm1} (ii), then $d_{\mathscr{C}(\A),V}=d_{V'}$, where $V'$ is the indecomposable non-projective left $\A'$-module $H_{\mathscr{X}(\A)}(V)$ and $d_{V'}$ denotes the closest distance of the isomorphism class of $V'$ to the boundary of the stable Auslander-Reiten quiver of $\A'$.  
\end{remark}
\subsection{Gorenstein-projective modules over triangular matrix algebras}\label{ss3}

Let $\Sigma$ be a triangular matrix $\k$-algebra as in (\ref{triangalgebra}), and let $R$ be a ring in $\hat{\Ca}$ be fixed but arbitrary. 
The morphism
\begin{align*}
\Phi: R\Sigma\to \begin{pmatrix}R\A & B_R\\ 0& R\G \end{pmatrix}
\end{align*}
defined as $\Phi(r\otimes \begin{pmatrix} \lambda & b\\0& \gamma\end{pmatrix})=\begin{pmatrix} r\otimes \lambda & r\otimes b\\0& r\otimes \gamma\end{pmatrix}$ for all $r\in R$, $\lambda\in \A$, $b\in B$ and $\gamma \in \G$ is an isomorphism of $R$-algebras, and thus 
\begin{equation}
R\Sigma\cong \begin{pmatrix} R\A & B_R\\0 & R\G\end{pmatrix}, 
\end{equation} 
where $B_R$ is the $R\A$-$R\G$-bimodule $R\otimes_\k B$. 

In the following, we recall the description of the objects and morphisms in $R\Sigma$-mod (for more details see \cite[Chap. III, \S2]{auslander}). A left $R\Sigma$-module is of the form $\begin{pmatrix} M\\ N\end{pmatrix}_g$, where $M$ is a left $R\A$-module, $N$ is a left $R\G$-module and $g: B_R\otimes_{R\G} N\to M$ is a $R\A$-module homomorphism.  An $R\Sigma$-module homomorphism between two left $R\Sigma$-modules $\begin{pmatrix} M\\N\end{pmatrix}_g$ and $\begin{pmatrix} M'\\N'\end{pmatrix}_{g'}$ is of the form  $\begin{pmatrix} \alpha\\\beta\end{pmatrix}: \begin{pmatrix} M\\N\end{pmatrix}_g\to \begin{pmatrix} M'\\N'\end{pmatrix}_{g'}$, where $\alpha: M\to M'$ is a $R\A$-module homomorphism, $\beta:N\to N'$ is a $R\G$-module homomorphism, and $g'\circ (\mathrm{id}_{B_R}\otimes \beta )=\alpha\circ f$.
A sequence $0\to\begin{pmatrix} M\\N\end{pmatrix}_g\xrightarrow{\tiny\begin{pmatrix} \alpha\\\beta\end{pmatrix}}\begin{pmatrix} M'\\N'\end{pmatrix}_{g'}\xrightarrow{\tiny\begin{pmatrix} \alpha'\\\beta'\end{pmatrix}}\begin{pmatrix} M''\\N''\end{pmatrix}_{g''}\to 0$ of left $R\Sigma$-modules is exact if and only if  $0\to M\xrightarrow{\alpha} M'\xrightarrow{\alpha'}M''\to 0$ and $0\to N\xrightarrow{\beta} N'\xrightarrow{\beta'} N''\to 0$ are exact sequences of left $R\A$-modules and left $R\G$-modules, respectively. The isomorphism classes of the indecomposable projective left $R\Sigma$-modules are induced by the left $R\Sigma$-modules of the form $\begin{pmatrix} \widetilde{P}\\0 \end{pmatrix}_0$ and $\begin{pmatrix}B_R\otimes_{R\G} \widetilde{Q}\\ \widetilde{Q}\end{pmatrix}_{\mathrm{id}_{\widetilde{Q}}}$, where $\widetilde{P}$ is an indecomposable projective left $R\A$-module, and $\widetilde{Q}$ is an indecomposable projective left $R\G$-module. 

Following \cite[Def. 1.1]{zhang}, we say that the $\A$-$\G$-bimodule $B$ is {\it compatible} if satisfies the following conditions: 
\begin{enumerate}
\item If $Q^\bullet$ is a exact sequence of projective $\G$-modules, then $B\otimes_\G Q^\bullet$ is also exact.
\item If $P^\bullet$ is a complete $\A$-projective resolution as in (\ref{completeres}), then $\Hom_\A(P^\bullet, B)$ is also exact. 
\end{enumerate}
In particular, if $B$ has finite projective dimension as a left $\A$-module and as a right $\G$-module, then $B$ is compatible (see \cite[Prop. 1.3]{zhang}). The following description of Gorenstein-projective modules over triangular matrix $\k$-algebras is due to P. Zhang (see \cite[Thm. 1.4]{zhang}).

\begin{theorem}\label{zhangtheorem}
Let $\Sigma$ be a triangular matrix $\k$ algebra as in (\ref{triangalgebra}) with $B$ a compatible $\A$-$\G$-bimodule, and let $\begin{pmatrix} V\\W\end{pmatrix}_f$ be a left $\Sigma$-module. Then $\begin{pmatrix} V\\W\end{pmatrix}_f$ is Gorenstein-projective if and only if the $\A$-module homomorphism $f: B\otimes_\G W\to V$ is injective, $\mathrm{coker}\,f$ is a Gorenstein-projective left $\A$-module, and $W$ is a Gorenstein-projective left $\G$-module. Moreover, $V$ is a Gorenstein-projective left $\A$-module if and only if $B\otimes_\G W$ is also a Gorenstein-projective left $\A$-module.    
\end{theorem}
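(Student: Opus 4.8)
The plan is to analyze a complete projective resolution over $\Sigma$ by transporting it through the standard functors relating $\Sigma\text{-mod}$ to $\A\text{-mod}$ and to $\G\text{-mod}$. Write $\mathbb{T}_\A\colon \A\text{-mod}\to\Sigma\text{-mod}$ for $X\mapsto\begin{pmatrix}X\\0\end{pmatrix}_0$ and $\mathbb{T}_\G\colon \G\text{-mod}\to\Sigma\text{-mod}$ for $Y\mapsto\begin{pmatrix}B\otimes_\G Y\\Y\end{pmatrix}_{\mathrm{id}}$, and let $U_\A,U_\G\colon \Sigma\text{-mod}\to\A\text{-mod},\,\G\text{-mod}$ be the functors carrying $\begin{pmatrix}M\\N\end{pmatrix}_g$ to $M$, resp.\ to $N$. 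From the description of morphisms and of the indecomposable projective modules in \S\ref{ss3} one reads off the facts I will use: all four functors are exact and preserve projectivity; there are adjunctions $\mathbb{T}_\A\dashv U_\A$ and $\mathbb{T}_\G\dashv U_\G$ with $U_\A\mathbb{T}_\G(\G)\cong B$ and $U_\G\mathbb{T}_\A(\A)=0$; as a left module ${}_\Sigma\Sigma\cong\mathbb{T}_\A(\A)\oplus\mathbb{T}_\G(\G)$; $U_\G$ admits the right adjoint $Z\mapsto\begin{pmatrix}0\\Z\end{pmatrix}_0$; and for any $\A$-module $Z$ there is a natural isomorphism $\Hom_\Sigma(\begin{pmatrix}M\\N\end{pmatrix}_g,\begin{pmatrix}Z\\0\end{pmatrix}_0)\cong\Hom_\A(\mathrm{coker}\,g,Z)$. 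Finally, whenever $f$ is injective there is a short exact sequence of $\Sigma$-modules
\[
0\longrightarrow \mathbb{T}_\G(W)\longrightarrow\begin{pmatrix}V\\W\end{pmatrix}_f\longrightarrow \mathbb{T}_\A(\mathrm{coker}\,f)\longrightarrow 0,
\]
with first map $\begin{pmatrix}f\\\mathrm{id}_W\end{pmatrix}$; this sequence is the backbone of the proof.

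For the ``if'' direction I would establish two lemmas. First, if $W$ is Gorenstein-projective over $\G$, with complete $\G$-projective resolution $Q^\bullet$, then $\mathbb{T}_\G(Q^\bullet)$ is a complex of projective $\Sigma$-modules that is acyclic (its bottom row $Q^\bullet$ is acyclic, and its top row $B\otimes_\G Q^\bullet$ is acyclic by the first compatibility condition on $B$), while $\Hom_\Sigma(\mathbb{T}_\G(Q^\bullet),\Sigma)\cong\Hom_\G(Q^\bullet,\G)$ by $\mathbb{T}_\G\dashv U_\G$ together with ${}_\Sigma\Sigma\cong\mathbb{T}_\A(\A)\oplus\mathbb{T}_\G(\G)$ and $U_\G\mathbb{T}_\A(\A)=0$; hence $\mathbb{T}_\G(W)$ is Gorenstein-projective over $\Sigma$. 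Second, if $U$ is Gorenstein-projective over $\A$, with complete $\A$-projective resolution $P^\bullet$, then $\mathbb{T}_\A(P^\bullet)$ is an acyclic complex of projective $\Sigma$-modules and $\Hom_\Sigma(\mathbb{T}_\A(P^\bullet),\Sigma)\cong\Hom_\A(P^\bullet,\A)\oplus\Hom_\A(P^\bullet,B)$, the first summand being acyclic by the choice of $P^\bullet$ and the second by the second compatibility condition on $B$; hence $\mathbb{T}_\A(U)$ is Gorenstein-projective over $\Sigma$. Granting the three hypotheses on $\begin{pmatrix}V\\W\end{pmatrix}_f$, the backbone sequence then realizes it as an extension of $\mathbb{T}_\G(W)$ by $\mathbb{T}_\A(\mathrm{coker}\,f)$, both Gorenstein-projective over $\Sigma$; since Gorenstein-projective $\Sigma$-modules are closed under extensions, $\begin{pmatrix}V\\W\end{pmatrix}_f$ is Gorenstein-projective.

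For the ``only if'' direction, fix a complete $\Sigma$-projective resolution $\mathbb{P}^\bullet$, $\mathbb{P}^i=\begin{pmatrix}M^i\\N^i\end{pmatrix}_{g^i}$, having $\begin{pmatrix}V\\W\end{pmatrix}_f$ as a cocycle; projectivity of $\mathbb{P}^i$ forces $N^i$ to be projective over $\G$, $g^i$ to be a split monomorphism, and $\bar M^i:=\mathrm{coker}\,g^i$ to be projective over $\A$. Applying $U_\G$ gives an acyclic complex $N^\bullet$ of projective $\G$-modules with cocycle $W$; applying $U_\A$ gives the acyclic complex $M^\bullet$; and the $g^i$ assemble into a termwise split monomorphism of complexes $B\otimes_\G N^\bullet\to M^\bullet$ whose source is acyclic by the first compatibility condition on $B$, so that its cokernel $\bar M^\bullet$ is an acyclic complex of projective $\A$-modules, and on cocycles in the relevant degree one gets a short exact sequence $0\to B\otimes_\G W\xrightarrow{f} V\to\mathrm{coker}\,f\to 0$ (the left map being identified with the structure map $f$), so in particular $f$ is injective. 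That $\mathrm{coker}\,f$ is Gorenstein-projective over $\A$ follows since $\Hom_\A(\bar M^\bullet,\A)\cong\Hom_\Sigma(\mathbb{P}^\bullet,\mathbb{T}_\A(\A))$ is a direct summand of the acyclic complex $\Hom_\Sigma(\mathbb{P}^\bullet,\Sigma)$, so $\bar M^\bullet$ is a complete $\A$-projective resolution of $\mathrm{coker}\,f$. That $W$ is Gorenstein-projective over $\G$ follows since $\Hom_\G(N^\bullet,\G)\cong\Hom_\Sigma(\mathbb{P}^\bullet,\begin{pmatrix}0\\\G\end{pmatrix}_0)$ and, applying $\Hom_\Sigma(\mathbb{P}^\bullet,-)$ to the short exact sequence $0\to\mathbb{T}_\A(B)\to\mathbb{T}_\G(\G)\to\begin{pmatrix}0\\\G\end{pmatrix}_0\to 0$, the middle complex is acyclic ($\mathbb{T}_\G(\G)$ being a summand of ${}_\Sigma\Sigma$) while $\Hom_\Sigma(\mathbb{P}^\bullet,\mathbb{T}_\A(B))\cong\Hom_\A(\bar M^\bullet,B)$ is acyclic by the second compatibility condition applied to the complete $\A$-projective resolution $\bar M^\bullet$ just obtained. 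Finally the ``moreover'' statement drops out of the sequence $0\to B\otimes_\G W\to V\to\mathrm{coker}\,f\to 0$, since Gorenstein-projective $\A$-modules form a resolving subcategory, i.e.\ are closed under extensions and under kernels of epimorphisms between such modules.

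I expect the main obstacle to be the low-level bookkeeping in the ``only if'' direction rather than any conceptual point: one must check that the map on cocycles induced by the $g^i$ really is the given structure map $f$, and that each of the natural isomorphisms of the form $\Hom_\Sigma(\mathbb{P}^\bullet,-)\cong\Hom_\A(-,-)$ or $\Hom_\G(-,-)$ is compatible with the differentials, so that acyclicity transfers along it. The conceptual core is transparent: the first compatibility condition is exactly what keeps $B\otimes_\G(-)$ exact on acyclic complexes of projective $\G$-modules, the second is exactly what keeps $\Hom_\A(-,B)$ exact on complete $\A$-projective resolutions, and together they let a complete resolution over $\Sigma$ split along $U_\A$ and $U_\G$ into complete resolutions over $\A$ and $\G$ and be reassembled.
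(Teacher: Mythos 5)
The paper does not prove this statement: Theorem~\ref{zhangtheorem} is quoted verbatim from P.~Zhang \cite[Thm.~1.4]{zhang} and the text immediately preceding it makes clear the result is being cited, not established. So there is no in-paper proof to compare against; what you have produced is a reconstruction of Zhang's argument, and it is correct and complete.

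Your strategy is the standard (and, I believe, essentially Zhang's own) one. The backbone short exact sequence
\begin{equation*}
0\longrightarrow \mathbb{T}_\G(W)\longrightarrow\begin{pmatrix}V\\W\end{pmatrix}_f\longrightarrow \mathbb{T}_\A(\mathrm{coker}\,f)\longrightarrow 0
\end{equation*}
together with the decomposition ${}_\Sigma\Sigma\cong\mathbb{T}_\A(\A)\oplus\mathbb{T}_\G(\G)$ and the adjunctions $\mathbb{T}_\A\dashv U_\A$, $\mathbb{T}_\G\dashv U_\G$ are exactly the right tools, and you correctly identify where each of the two compatibility conditions on $B$ enters: the first to keep $B\otimes_\G(-)$ exact on acyclic complexes of projectives, the second to keep $\Hom_\A(-,B)$ exact on complete $\A$-projective resolutions. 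The verification that the map induced on cocycles by the $g^i$ agrees with $f$, which you flag as the main bookkeeping point, does go through via the compatibility equation $g'\circ(\mathrm{id}_B\otimes\beta)=\alpha\circ g$ built into morphisms in $\Sigma$-mod. The deduction of the ``only if'' facts for $W$ from the short exact sequence $0\to\mathbb{T}_\A(B)\to\mathbb{T}_\G(\G)\to\begin{pmatrix}0\\\G\end{pmatrix}_0\to 0$ after establishing that $\bar M^\bullet$ is a complete $\A$-projective resolution is a clean way to organize the argument, and the ``moreover'' clause does drop out of the resolving property of $\A$-Gproj.

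Two small inaccuracies in the preamble are worth fixing even though they are never used in the body of the proof: $\mathbb{T}_\G$ is only right exact in general (it is exact iff $B$ is flat as a right $\G$-module), and $U_\A$ does not preserve projectivity in general ($U_\A(\Sigma)\cong\A\oplus B$, which is projective over $\A$ only when $B$ is). You never rely on either false claim --- you argue acyclicity of $\mathbb{T}_\G(Q^\bullet)$ directly from the compatibility condition, and you pass from $M^\bullet$ to $\bar M^\bullet$ before ever needing projective terms --- but the blanket assertion ``all four functors are exact and preserve projectivity'' should be tempered.
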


Recall that by \cite[Thm. 2.2 (i)]{xiong-zhang} we have that if $B$ has finite projective dimension as a left $\A$-module, then $\Sigma$ is Gorenstein if and only if $\A$ and $\G$ are both Gorenstein, and $B$ has also finite projective dimension as a right $\G$-module. It is conjectured in \cite[Conjecture I]{zhang} that $\Sigma$ is Gorenstein if and only if $\A$ and $\G$ are both Gorenstein and $B$ has finite projective dimension as a left $\A$-module and as a right $\G$-module. This is closely related to the {\it Gorenstein Symmetric Conjecture}, which claims that for an arbitrary Artinian algebra $A$, the injective dimension of $A$ as a left $A$-module is finite if and only if the injective dimension of $A$ as a right $A$-module is also finite (see \cite[\S 2]{zhang} for more details).

\begin{remark}
Assume that $\Sigma$ is Gorenstein with $B$ projective as a left $\A$-module and $\G$ of finite global dimension. Let $\begin{pmatrix} V\\W\end{pmatrix}_f$ be a Gorenstein-projective left $\Sigma$-module. Then by Theorem \ref{zhangtheorem}, it follows that $W$ is a Gorenstein-projective and thus a projective left $\G$-module. This implies that $B\otimes_\G W$ is a projective left $\A$-module. It follows again by Theorem \ref{zhangtheorem} that $V$ is then a Gorenstein-projective left $\A$-module. Then by \cite[Cor. 2.7 (i)]{zhang}, we obtain that the operator
\begin{equation*}\label{functorgor}
i^!: \Sigma\textup{-Gproj}\to \A\textup{-Gproj}
\end{equation*}
defined by $\begin{pmatrix} V\\W\end{pmatrix}_f\mapsto V$ induces an equivalence of stable categories
\begin{equation}
i^!:\Sigma\textup{-\underline{Gproj}}\to \A\textup{-\underline{Gproj}}
\end{equation}
whose quasi-inverse is given by the functor $i_\ast: \A\textup{-\underline{Gproj}}\to \Sigma\textup{-\underline{Gproj}}$ which sends every non-projective Gorenstein-projective left $\A$-module $V$ to $\begin{pmatrix} V\\0\end{pmatrix}_0$.
\end{remark}

\section{Proof of the main results}\label{sec3}

Let $\k$ and $\hat{\Ca}$ be as in \S\ref{int}. In order to prove Theorems \ref{thm1} and \ref{thm2}, we use the continuity of the deformation functor (see \cite[Prop. 2.1]{blehervelez}) in order to consider only deformations of modules over Artinian rings in $\hat{\Ca}$.  

\subsection{Proof of Theorem \ref{thm1}}
Assume that $\A$ is a basic connected Nakayama $\k$-algebra without simple projective objects and with $\mathscr{C}(\A)\not=0$, and let $P_{\mathscr{X}(\A)}$, $\A'$ and $H_{\mathscr{X}(\A)}(-)$ be as in (\ref{projective-generator}). Let $R$ be a fixed Artinian ring in $\hat{\Ca}$, and let 
\begin{align*}
P_{R,\mathscr{X}(\A)}=R\otimes_{\k,\iota_R}P_{\mathscr{X}(\A)}&&\text{and}&& H_{R,\mathscr{X}(\A)}(-)=\Hom_{R\A}(P_{R,\mathscr{X}(\A)},-).  
\end{align*}
Note that by Remark \ref{ProjR}, $(P_{R,\mathscr{X}(\A)}, \pi_{R,P_{\mathscr{X}(\A)}})$ is a lift of $P_{\mathscr{X}(\A)}$ over $R$. Moreover, $P_{R,\mathscr{X}(\A)}$ is also a projective $R\A$-module cover of $P_{\mathscr{X}(\A)}$. On the other hand, since $R\A'=\End_{R\A}(P_{R,\mathscr{X}(\A)})^\text{op}$, it follows that the left $R\A$-module $P_{R,\mathscr{X}(\A)}$ is also a right $R\A'$-module. Note also that the functor $H_{R,\mathscr{X}(\A)}(-)$ sends left $R\A$-modules to left $R\A'$-modules. 

\begin{remark}\label{projadd}
Let $P$ be a projective left $\A$-module, and let $P_R$ be as in Remark \ref{ProjR}. If $P$ is an object in $\mathrm{add}\, P_{\mathscr{X}(\A)}$, then $P_R$ is an object of $\mathrm{add}\, P_{R,\mathscr{X}(\A)}$.
\end{remark}

For all left $R\A$-modules $M$, let
\begin{equation}\label{etaM}
\eta_M:P_{R,\mathscr{X}(\A)}\otimes_{R\A'}H_{R,\mathscr{X}(\A)}(M)\to M 
\end{equation}
be defined as $\eta_M(p\otimes \sigma)=\sigma(p)$ for $p\in P_{R,\mathscr{X}(\A)}$ and $\sigma\in H_{R,\mathscr{X}(\A)}(M)$. It is straightforward to prove that $\eta_M$ is a morphism of left $R\A$-modules, which is natural with respect to morphisms $g:M\to M'$ in $R\A$-mod, i.e., 
\begin{equation}\label{naturaleta}
\eta_{M'}\circ (\mathrm{id}_{P_{R,\mathscr{X}(\A)}}\otimes_{R\A'}H_{R,\mathscr{X}(\A)}(g))=g\circ\eta_M.
\end{equation}


Throughout the remainder of this section, we let $V$ be a fixed indecomposable non-projective object in $\mathscr{C}(\A)$. 
\begin{lemma}\label{lemma31}
 Let $(M,\phi)$ be a lift of $V$ over $R$.  
\begin{enumerate}
\item The left $R\A'$-module $(H_{R,\mathscr{X}(\A)}(M), H_{R,\mathscr{X}(\A)}(\phi))$ is a lift of the left $\A'$-module $H_{\mathscr{X}(\A)}(V)$ over $R$, where $H_{R,\mathscr{X}(\A)}(\phi):\k\otimes_RH_{R,\mathscr{X}(\A)}(M)\to H_{\mathscr{X}(\A)}(V)$ is an isomorphism of left $\A'$-modules induced by $\phi$.
\item The morphism $\eta_M$ as in (\ref{etaM}) is an isomorphism of left $R\A$-modules such that $\phi\circ(\mathrm{id}_\k\otimes\eta_M)=\eta_V\circ(\pi_{R,P_{\mathscr{X}(\A)}}\otimes H_{R,\mathscr{X}(\A)}(\phi))$.
\end{enumerate}
\end{lemma}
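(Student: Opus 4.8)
The plan is to exploit the fact that $H_{\mathscr{X}(\A)}(-)$ and $P_{\mathscr{X}(\A)}\otimes_{\A'}(-)$ are quasi-inverse equivalences between $\mathscr{C}(\A)$ and $\A'\textup{-mod}$ (Remark \ref{rem00}), and that $R$ is a flat (indeed free) $\k$-module since $R$ is Artinian in $\hat{\Ca}$, so that base change along $\k\to R$ is exact and commutes with $\Hom$ and $\otimes$ in the relevant way. First I would address part (i). Since $M$ is free over $R$ and $P_{R,\mathscr{X}(\A)} = R\otimes_{\k,\iota_R}P_{\mathscr{X}(\A)}$, I would show that $H_{R,\mathscr{X}(\A)}(M) = \Hom_{R\A}(P_{R,\mathscr{X}(\A)},M)$ is free over $R$: this follows because $\Hom_{R\A}(R\otimes_\k P_{\mathscr{X}(\A)}, M)\cong \Hom_\A(P_{\mathscr{X}(\A)}, M)$ as $R$-modules (adjunction/extension of scalars), and $M$ free over $R$ together with $P_{\mathscr{X}(\A)}$ finitely generated projective over $\A$ forces this $\Hom$ to be free over $R$. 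Then I would produce the comparison isomorphism: the natural base-change map $\k\otimes_R \Hom_{R\A}(P_{R,\mathscr{X}(\A)},M)\to \Hom_\A(P_{\mathscr{X}(\A)},\k\otimes_R M)$ is an isomorphism because $P_{\mathscr{X}(\A)}$ is finitely generated projective and $M$ is $R$-free, and composing with $H_{\mathscr{X}(\A)}(\phi)$ (i.e. applying $\Hom_\A(P_{\mathscr{X}(\A)},-)$ to the isomorphism $\phi:\k\otimes_R M\to V$) yields the desired $\A'$-module isomorphism $H_{R,\mathscr{X}(\A)}(\phi):\k\otimes_R H_{R,\mathscr{X}(\A)}(M)\to H_{\mathscr{X}(\A)}(V)$. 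This exhibits $(H_{R,\mathscr{X}(\A)}(M),H_{R,\mathscr{X}(\A)}(\phi))$ as a lift of $H_{\mathscr{X}(\A)}(V)$ over $R$.

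For part (ii), I would argue that $\eta_M$ is an isomorphism by a reduction-mod-$\m_R$ argument combined with Nakayama's lemma (using that $R$ is local Artinian and all modules in sight are finitely generated). Concretely: both source and target of $\eta_M$ are free/finitely generated over $R$; reducing $\eta_M$ modulo $\m_R$ and using the compatibilities $\pi_{R,P_{\mathscr{X}(\A)}}$ and $H_{R,\mathscr{X}(\A)}(\phi)$ from part (i) together with the naturality square (\ref{naturaleta}), I would identify $\k\otimes_R\eta_M$ with $\eta_V$ up to the isomorphisms $\phi$, $\pi_{R,P_{\mathscr{X}(\A)}}$, $H_{R,\mathscr{X}(\A)}(\phi)$ — this is precisely the stated formula $\phi\circ(\mathrm{id}_\k\otimes\eta_M)=\eta_V\circ(\pi_{R,P_{\mathscr{X}(\A)}}\otimes H_{R,\mathscr{X}(\A)}(\phi))$. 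Since $V$ lies in $\mathscr{C}(\A)$, the counit $\eta_V$ is an isomorphism (it is the counit of the equivalence $\mathscr{C}(\A)\cong\A'\textup{-mod}$ evaluated at $V$). Hence $\k\otimes_R\eta_M$ is an isomorphism, and by Nakayama $\eta_M$ is surjective; a parallel count of $R$-ranks (or the fact that a surjection of free $R$-modules of equal finite rank is an isomorphism, with the ranks matched via part (i) and the equivalence over $\k$) gives injectivity. Alternatively, one can run the same reduction on the kernel and cokernel of $\eta_M$, both finitely generated over $R$, to conclude they vanish.

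The main obstacle I anticipate is the careful bookkeeping around the base-change isomorphism $\k\otimes_R\Hom_{R\A}(P_{R,\mathscr{X}(\A)},M)\xrightarrow{\sim}\Hom_\A(P_{\mathscr{X}(\A)},\k\otimes_R M)$ and its compatibility with $\eta$: one must check that the natural transformation $\eta$ is genuinely compatible with the two instances of $H$ and $P\otimes-$ at levels $\k$ and $R$, i.e. that the diagram relating $\eta_M$ over $R$ and $\eta_V$ over $\k$ commutes on the nose (not merely up to a non-canonical isomorphism). Everything else — freeness over $R$, the use of Nakayama, the rank comparison — is routine once one knows $P_{\mathscr{X}(\A)}$ is a finitely generated projective $\A$-module that is simultaneously a right $\A'$-module inducing the equivalence, and that $R$ is local Artinian with residue field $\k$. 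I would organize the write-up so that part (i) establishes the $R$-freeness and the comparison isomorphism once and for all, and part (ii) then only needs the reduction-mod-$\m_R$ identification plus $\eta_V$ being the counit of an equivalence.
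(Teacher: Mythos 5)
Your part (i) is in substance the same as the paper's: you both reduce the $R$-freeness of $H_{R,\mathscr{X}(\A)}(M)=\Hom_{R\A}(P_{R,\mathscr{X}(\A)},M)$ to finite projectivity of $P_{\mathscr{X}(\A)}$ over $\A$ plus freeness of $M$ over $R$, and then produce $H_{R,\mathscr{X}(\A)}(\phi)$ by a base-change isomorphism composed with $H_{\mathscr{X}(\A)}(\phi)$ (the paper routes this through the identification $\Hom_{R\A}(P_{R,\mathscr{X}(\A)},-)\cong H_{R,\mathscr{X}(\A)}(R\A)\otimes_{R\A}-$ from Anderson--Fuller, which is just a bookkeeping variant of your adjunction argument).

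For part (ii) you take a genuinely different route. The paper lifts a minimal projective presentation $P^{-1}\to P^{0}\to V\to 0$ with $P^{0},P^{-1}\in\mathrm{add}\,P_{\mathscr{X}(\A)}$ to a projective presentation $P^{-1}_{R}\to P^{0}_{R}\to M\to 0$ over $R$, observes (Anderson--Fuller Lemma 29.4) that $\eta_{P^{i}_{R}}$ is an isomorphism because $P^{i}_{R}\in\mathrm{add}\,P_{R,\mathscr{X}(\A)}$, and then concludes by the Five Lemma applied to the two right-exact rows. You instead reduce $\eta_M$ modulo $\mathfrak{m}_R$, identify $\k\otimes_R\eta_M$ with $\eta_V$ via the displayed compatibility, invoke that $\eta_V$ is the counit of the equivalence $\mathscr{C}(\A)\cong\A'$-mod and hence an isomorphism, and deduce the result by Nakayama. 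Both are correct; the paper's presentation-plus-Five-Lemma route sidesteps any need to know the source of $\eta_M$ is $R$-free or to count ranks, while your approach is shorter once the compatibility square is established. Note, however, that your ``parallel count of $R$-ranks'' variant does require knowing the source $P_{R,\mathscr{X}(\A)}\otimes_{R\A'}H_{R,\mathscr{X}(\A)}(M)$ is free over $R$ of the right rank, which is not automatic; your alternative via the kernel is cleaner: once $\eta_M$ is shown surjective by Nakayama, the short exact sequence $0\to\ker\eta_M\to(\cdot)\to M\to 0$ is $R$-split because $M$ is $R$-free, so $\k\otimes_R$ is exact on it, $\k\otimes_R\ker\eta_M$ embeds in $\ker(\k\otimes_R\eta_M)=0$, and Nakayama kills $\ker\eta_M$. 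You are also right that the only real work in your version is the naturality check establishing $\phi\circ(\mathrm{id}_\k\otimes\eta_M)=\eta_V\circ(\pi_{R,P_{\mathscr{X}(\A)}}\otimes H_{R,\mathscr{X}(\A)}(\phi))$; the paper proves exactly this identity at the end of its proof of (ii), so that step cannot be bypassed in either approach.
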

\begin{proof}
(i). First note that by \cite[Prop. 20.10]{anderson-fuller}, there exists an isomorphism of right $R\A'$-modules
\begin{equation*}
H_{R,\mathscr{X}(\A)}(M)\cong H_{R,\mathscr{X}(\A)}(R\A) \otimes_{R\A} M.
\end{equation*}
Since $H_{R,\mathscr{X}(\A)}(R\A)$ is a projective right $R\A$-module and $M$ is free over $R$, it follows by \cite[Ex. 2.1]{lam} that $H_{R,\mathscr{X}(\A)}(M)$ is projective (and thus free) over $R$. On the other hand, since $H_{R,\mathscr{X}(\A)}(R\A)=R\otimes_\k H_{\mathscr{X}(\A)}(\A)$, we obtain a composition of left $\A'$-module isomorphisms
\begin{align*}
\k\otimes_RH_{R,\mathscr{X}(\A)}(M)&\cong\k\otimes_R((R\otimes_\k H_{\mathscr{X}(\A)}(\A))\otimes_{R\A}M)\cong H_{\mathscr{X}(\A)}(\A)\otimes_\A(\k\otimes_RM)\\
&\cong H_{\mathscr{X}(\A)}(\k\otimes_RM)\xrightarrow{H_{\mathscr{X}(\A)}(\phi)}H_{\mathscr{X}(\A)}(V), 
\end{align*}
which we denote by $H_{R,\mathscr{X}(\A)}(\phi)$. Thus $(H_{R,\mathscr{X}(\A)}(M), H_{R,\mathscr{X}(\A)}(\phi))$ is a lift of the left $\A'$-module $H_{\mathscr{X}(\A)}(V)$ over $R$.

(ii). Let $P^{-1}\xrightarrow{\epsilon^{-1}}P^0\xrightarrow{\epsilon^0} V\to 0$ be a minimal projective presentation of $V$, i.e., $\epsilon^{0}: P^0\to V$ and $\epsilon^{-1}: P^1\to \ker \epsilon^0=\Omega V$ are projective left $\A$-module covers. Note that since $P_{\mathscr{X}(\A)}$ is a projective generator of $\mathscr{C}(\A)$, it follows that $P_1$ and $P_2$ are in $\mathrm{add}\,P_{\mathscr{X}(\A)}$. Using that $(M, \phi)$, $(P^0_R, \pi_{R, P^0})$ and $(P^{-1}_R, \pi_{R, P^{-1}})$ are lifts of $V$, $P^0$ and $P^{-1}$ over $R$, respectively, together with the fact that $\epsilon^0$ and $\epsilon^{-1}$ are essential epimorphisms, we use Remark \ref{ProjR} to obtain a projective presentation $P^{-1}_R\xrightarrow{\alpha^{-1}}P^0_R\xrightarrow{\alpha^0}M\to 0$ of the $R\A$-module $M$ such that $\phi\circ (\mathrm{id}_\k\otimes\alpha^0)=\epsilon^0\circ \pi_{R,P^0}$ and $\pi_{R,P^0}\circ (\mathrm{id}_\k\otimes\alpha^{-1})=\epsilon^{-1}\circ \pi_{R,P^{-1}}$.  Since $P^0_R$ and $P^{-1}_R$ are in $\mathrm{add}\, P_{R,\mathscr{X}(\A)}$ by Remark \ref{projadd}, it follows by \cite[Lemma 29.4]{anderson-fuller} that for $i=-1,0$, the morphism
\begin{equation*}
\eta_{P_R^i}: P_{R,\mathscr{X}(\A)}\otimes_{R\A'}H_{R,\mathscr{X}(\A)}(P_R^i)\to P_R^i
\end{equation*} 
is an isomorphism of left $R\A$-modules, where $\eta_{P_R^i}$ is as in (\ref{etaM}). Since by (\ref{naturaleta}) we have that
\begin{align*}
\eta_M\circ (\mathrm{id}_{P_{R,\mathscr{X}(\A)}}\otimes H_{R,\mathscr{X}(\A)}(\alpha^0))&=\alpha^0\circ \eta_{P^0_R}, \text{  and  } \eta_{P_R^0}\circ (\mathrm{id}_{P_{R,\mathscr{X}(\A)}}\otimes H_{R,\mathscr{X}(\A)}(\alpha^{-1}))=\alpha^{-1}\circ \eta_{P^{-1}_R},
\end{align*}
it follows by the Five Lemma that the morphism $\eta_M$ is an isomorphism of left $R\A$-modules. On the other hand, since 
\begin{equation*}
\k\otimes_R(P_{R,\mathscr{X}(\A)}\otimes_{R\A'}H_{R,\mathscr{X}(\A)}(M))=(\k\otimes_RP_{R,\mathscr{X}(\A)})\otimes_{\A'}(\k\otimes_RH_{R,\mathscr{X}(\A)}(M)),
\end{equation*}
it follows that $(P_{R,\mathscr{X}(\A)}\otimes_{R\A'}H_{R,\mathscr{X}(\A)}(M), \pi_{R,P_{\mathscr{X}(\A)}}\otimes H_{R,\mathscr{X}(\A)}(\phi)))$ is a lift of the left $\A$-module $P_{\mathscr{X}(\A)}\otimes_{\A'}H_{\mathscr{X}(\A)}(V)$ over $R$ which satisfies $\phi\circ(\mathrm{id}_\k\otimes\eta_M)=\eta_V\circ(\pi_{R,P_{\mathscr{X}(\A)}}\otimes H_{R,\mathscr{X}(\A)}(\phi))$. 
\end{proof}

By Lemma \ref{lemma31} we obtain a well-defined map between set of deformations
\begin{equation}\label{naturalR}
\tau_R:\Def_\A(V,R)\to \Def_{\A'}(H_{\mathscr{X}(\A)}(V), R),
\end{equation}
that sends $[M,\phi]\in \Def_\A(V,R)$ to $[H_{R,\mathscr{X}(\A)}(M),H_{R,\mathscr{X}(\A)}(\phi)]$.

\begin{proposition}\label{lemma3.1}
The map $\tau_R$ as in (\ref{naturalR}) is a bijection which is also natural with respect to morphisms $\theta:R\to R'$ between Artinian rings in $\hat{\Ca}$.
\end{proposition}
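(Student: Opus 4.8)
The plan is to prove that $\tau_R$ is a bijection by exhibiting an explicit inverse built from the quasi-inverse functor $P_{R,\mathscr{X}(\A)}\otimes_{R\A'}-$ of the equivalence $H_{R,\mathscr{X}(\A)}(-)$, and then to check naturality by a direct diagram chase. First I would define, for each Artinian ring $R$ in $\hat{\Ca}$, a candidate inverse map
\[
\rho_R:\Def_{\A'}(H_{\mathscr{X}(\A)}(V),R)\to\Def_\A(V,R)
\]
sending a deformation $[N,\psi]$ of $H_{\mathscr{X}(\A)}(V)$ over $R$ to $[P_{R,\mathscr{X}(\A)}\otimes_{R\A'}N,\ \eta_V\circ(\pi_{R,P_{\mathscr{X}(\A)}}\otimes\psi)]$. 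The first thing to verify is that this is well defined: since $N$ is free over $R$ and $P_{R,\mathscr{X}(\A)}=R\otimes_{\k,\iota_R}P_{\mathscr{X}(\A)}$, the tensor product $P_{R,\mathscr{X}(\A)}\otimes_{R\A'}N$ is free over $R$ (the same base-change identity used at the end of the proof of Lemma~\ref{lemma31} gives $\k\otimes_R(P_{R,\mathscr{X}(\A)}\otimes_{R\A'}N)\cong P_{\mathscr{X}(\A)}\otimes_{\A'}(\k\otimes_RN)\cong P_{\mathscr{X}(\A)}\otimes_{\A'}H_{\mathscr{X}(\A)}(V)\cong V$, using that $P_{\mathscr{X}(\A)}\otimes_{\A'}-$ is the quasi-inverse of $H_{\mathscr{X}(\A)}(-)$ on $\mathscr{C}(\A)$ and that $V\in\mathscr{C}(\A)$), so $(P_{R,\mathscr{X}(\A)}\otimes_{R\A'}N,\ \eta_V\circ(\pi_{R,P_{\mathscr{X}(\A)}}\otimes\psi))$ is indeed a lift of $V$ over $R$, and isomorphic lifts $N$ go to isomorphic lifts.

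Next I would show $\rho_R$ and $\tau_R$ are mutually inverse. That $\tau_R\circ\rho_R=\mathrm{id}$ follows because $H_{R,\mathscr{X}(\A)}(P_{R,\mathscr{X}(\A)}\otimes_{R\A'}N)\cong N$ as a lift, using that $P_{R,\mathscr{X}(\A)}$ is a projective generator giving an equivalence $\mathscr{C}_R\cong R\A'\text{-mod}$ over $R$ (or, more concretely, the adjunction unit/counit for the pair $(P_{R,\mathscr{X}(\A)}\otimes_{R\A'}-,\ H_{R,\mathscr{X}(\A)}(-))$ restricted to objects coming from $\mathscr{C}(\A)$), and one tracks the isomorphism $\phi$-data through this identification. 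That $\rho_R\circ\tau_R=\mathrm{id}$ is exactly the content of Lemma~\ref{lemma31}(ii): the counit $\eta_M:P_{R,\mathscr{X}(\A)}\otimes_{R\A'}H_{R,\mathscr{X}(\A)}(M)\to M$ is an isomorphism, and the compatibility $\phi\circ(\mathrm{id}_\k\otimes\eta_M)=\eta_V\circ(\pi_{R,P_{\mathscr{X}(\A)}}\otimes H_{R,\mathscr{X}(\A)}(\phi))$ says precisely that $\eta_M$ is an isomorphism of lifts from $\rho_R(\tau_R([M,\phi]))$ to $[M,\phi]$. So the hard analytic work has already been done in Lemma~\ref{lemma31}; here one mainly has to assemble it.

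Finally, for naturality with respect to a morphism $\theta:R\to R'$ of Artinian rings in $\hat{\Ca}$, I would verify that
\[
\tau_{R'}\circ\hat{\Fun}_V(\theta)=\hat{\Fun}_{H_{\mathscr{X}(\A)}(V)}(\theta)\circ\tau_R.
\]
Unwinding definitions, this amounts to the canonical isomorphism
\[
H_{R'\A}\bigl(R'\otimes_{R,\theta}M,\ P_{R',\mathscr{X}(\A)}\bigr)\ \cong\ R'\otimes_{R,\theta}H_{R\A}(M,\ P_{R,\mathscr{X}(\A)})
\]
compatible with the $\phi$-data, i.e.\ that $H$ commutes with the base-change functor $R'\otimes_{R,\theta}-$. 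This holds because $P_{R,\mathscr{X}(\A)}=R\otimes_{\k}P_{\mathscr{X}(\A)}$ with $P_{\mathscr{X}(\A)}$ finitely generated projective over $\A$, so $H_{R\A}(P_{R,\mathscr{X}(\A)},-)=R\otimes_\k\Hom_\A(P_{\mathscr{X}(\A)},-)$ on finitely generated modules and base change commutes with finite direct sums and with $\Hom$ out of a finitely generated projective; one then checks the isomorphism respects the chosen identifications $H_{R,\mathscr{X}(\A)}(\phi)$ and $\pi_{R,P_{\mathscr{X}(\A)}}$, which is a routine chase through the composition defining $H_{R,\mathscr{X}(\A)}(\phi)$ in the proof of Lemma~\ref{lemma31}(i). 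The main obstacle is bookkeeping: keeping the isomorphisms $\phi$, $H_{R,\mathscr{X}(\A)}(\phi)$, the counits $\eta$, and the base-change identifications all mutually compatible, rather than any deep new input — all the structural facts (projective generator, counit is an iso on $\mathrm{add}\,P_{R,\mathscr{X}(\A)}$, freeness over $R$) are already in place from Lemma~\ref{lemma31} and Remarks~\ref{ProjR}, \ref{rem00}, \ref{projadd}.
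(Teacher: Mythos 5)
Your proposal is correct and takes essentially the same route as the paper, with the injectivity/surjectivity argument repackaged as the construction of an explicit two-sided inverse $\rho_R$. The ingredients you identify—$\eta_M$ being an isomorphism of lifts by Lemma~\ref{lemma31}(ii), the counit identification $H_{R,\mathscr{X}(\A)}(P_{R,\mathscr{X}(\A)}\otimes_{R\A'}\widetilde{M})\cong\widetilde{M}$ coming from the progenerator property, and base change commuting with $H_{\mathscr{X}(\A)}$—are precisely the ones the paper uses in its injectivity, surjectivity, and naturality steps.
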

\begin{proof}

First assume that $(M_1,\phi_1)$ and $(M_2,\phi_2)$ are two lifts of $V$ over $R$ such that there exists a $R\A'$-module isomorphism $g: H_{R,\mathscr{X}(\A)}(M_1)\to H_{R,\mathscr{X}(\A)}(M_2)$ with $H_{\mathscr{X}(\A)}(\phi_1)=H_{\mathscr{X}(\A)}(\phi_2)\circ (\mathrm{id}_\k\otimes g)$. Note that by Lemma \ref{lemma31} (ii), the morphisms $\eta_M$ and $\eta_{M'}$ as in (\ref{projadd}) are isomorphisms of left $R\A$-modules. Let $f:M\to M'$ be the composition of left $R\A$-module isomorphisms $\eta_{M_2}\circ (\mathrm{id}_{P_{R,\mathscr{X}(\A)}}\otimes g)\circ \eta_M^{-1}$. Since for $i=1,2$ we have 
\begin{equation*}
\k\otimes_R(P_{R,\mathscr{X}(\A)}\otimes_{R\A'}H_{R,\mathscr{X}(\A)}(M_i))=(\k\otimes_RP_{R,\mathscr{X}(\A)})\otimes_{\A'}(\k\otimes_RH_{R,\mathscr{X}(\A)}(M_i)),
\end{equation*}
and since 
\begin{equation*}
(\pi_{R,P_{\mathscr{X}(\A)}}\otimes H_{R,\mathscr{X}(\A)}(\phi_2))\circ (\mathrm{id}_{\k\otimes_RP_{R,\mathscr{X}(\A)}}\otimes (\mathrm{id}_\k\otimes g))=\pi_{R,P_{\mathscr{X}(\A)}}\otimes H_{R,\mathscr{X}(\A)}(\phi_1),
\end{equation*}
we obtain together with the second statement of Lemma \ref{lemma31} (ii) that $\phi_2\circ (\mathrm{id}_\k\otimes f)=\phi_1$. This proves that $\tau_R$ is injective. 

Assume next that $(\widetilde{M}, \widetilde{\phi})$ is a lift of $H_{\mathscr{X}(\A)}(V)$ over $R$. Since $P_{R,\mathscr{X}(\A)}$ and $\widetilde{M}$ are both free over $R$, it follows that the left $R\A$-module $P_{R,\mathscr{X}(\A)}\otimes_{R\A'}\widetilde{M}$ is also free over $R$.  Moreover, we have a composition of natural isomorphisms of left $\A$-modules
\begin{align*}
\k\otimes_R(P_{R,\mathscr{X}(\A)}\otimes_{R\A'}\widetilde{M})&=(\k\otimes_RP_{R,\mathscr{X}(\A)})\otimes_{\A'}(\k\otimes_R\widetilde{M})\xrightarrow{\pi_{R,P_{\mathscr{X}(\A)}}\otimes  \widetilde{\phi}}P_{\mathscr{X}(\A)}\otimes_{\A'}H_{\mathscr{X}(\A)}(V)\xrightarrow{\eta_V}V,
\end{align*}
which we denote by $\overline{\phi}$, and thus $(P_{R,\mathscr{X}(\A)}\otimes_{R\A'}\widetilde{M}, \overline{\phi})$ is a lift of $V$ over $R$. By \cite[Prop. 20.10]{anderson-fuller}, we also have natural isomorphisms of left $R\A$-modules 
\begin{align*}
H_{R,\mathscr{X}(\A)}(P_{R,\mathscr{X}(\A)}\otimes_{R\A'}\widetilde{M})&=\Hom_{R\A}(P_{R,\mathscr{X}(\A)},P_{R,\mathscr{X}(\A)}\otimes_{R\A'}\widetilde{M})\\
&=\Hom_{R\A}(P_{R,\mathscr{X}(\A)},P_{R,\mathscr{X}(\A)})\otimes_{R\A'}\widetilde{M}\\
&=R\A'\otimes_{R\A'}\widetilde{M}\\
&=\widetilde{M}, 
\end{align*}
which shows that the lifts $(H_{R,\mathscr{X}(\A)}(P_{R,\mathscr{X}(\A)}\otimes_{R\A'}\widetilde{M}),H_{R,\mathscr{X}(\A)}(\overline{\phi}))$ and $(\widetilde{M}, \widetilde{\phi})$ of $H_{\mathscr{X}(\A)}(V)$ over $R$ are isomorphic. This proves that $\tau_R$ is surjective, and thus $\tau_R$ is a bijection.  

Finally assume that $\theta:R\to R'$ is a morphism between Artinian rings in $\hat{\Ca}$. Then $R'\A=R'\otimes_{R,\theta}R\A$, $P_{R',\mathscr{X}(\A)}=R'\otimes_{R,\theta}P_{R,\mathscr{X}(\A)}$, $H_{\mathscr{X}(\A),R'}(R'\A)=R'\otimes_{R,\theta}H_{R,\mathscr{X}(\A)}(R\A)$, and for all lifts $(M,\phi)$ of $V$ over $R$, we have 
\begin{align*}
H_{\mathscr{X}(\A),R'}(R'\otimes_{R,\theta}M)&=H_{\mathscr{X}(\A),R'}(R'\A)\otimes_{R'\A}(R'\otimes_{R,\theta}M)\\
&=R'\otimes_{R,\theta}(H_{R,\mathscr{X}(\A)}(R\A)\otimes_{R\A}M)=R'\otimes_{R,\theta}H_{R,\mathscr{X}(\A)}(M),
\end{align*}
which implies that $\tau_R$ is natural with respect to morphisms $\theta:R\to R'$ in $\hat{\Ca}$. This finishes the proof of Proposition \ref{lemma3.1}.
\end{proof}
\begin{remark}\label{universal}
The continuity of the deformation functor (see \cite[Prop. 2.1]{blehervelez}) together with Proposition \ref{lemma3.1} imply that the versal deformation rings $R(\A,V)$ and $R(\A', H_{\mathscr{X}(\A)}(V))$ are isomorphic in $\hat{\Ca}$. Moreover, since $R(\A', H_{\mathscr{X}(\A)}(V))$ is universal by \cite[Thm. 1.3]{bleher15}, we also have that $R(\A,V)$ is universal. On the other hand, since $V$ be is an indecomposable non-projective object in $\mathscr{C}(\A)$, it follows that $\Omega V$ is also an indecomposable non-projective object in $\mathscr{C}(\A)$ (see e.g. \cite[Lemma 2.1]{chen-shen-zhou}), and thus we obtain that the versal deformation ring $R(\A,\Omega V)$ of $\Omega V$ is also universal.
\end{remark}  
To finish the proof of Theorem \ref{thm1} (i), we need to prove the following result. 
\begin{proposition}\label{cor3.4}
The universal deformation rings $R(\A,V)$ and $R(\A, \Omega V)$ are isomorphic in $\hat{\Ca}$.
\end{proposition}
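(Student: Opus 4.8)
The plan is to transport the question through the equivalence $\mathscr{C}(\A)\simeq \A'\text{-mod}$ of Remark \ref{rem00} and then invoke the known behaviour of versal deformation rings under syzygies over Frobenius algebras. First I would record that $\Omega V$ is again an indecomposable non-projective object of $\mathscr{C}(\A)$: by Ringel's description, $\mathscr{C}(\A)$ is a full exact abelian subcategory of $\A$-mod closed under projective covers, so the projective cover $P(V)\twoheadrightarrow V$ (taken in $\A$-mod) lies entirely in $\mathscr{C}(\A)$ and its kernel, computed in the abelian category $\mathscr{C}(\A)$, coincides with $\Omega V$; indecomposability and non-projectivity of $\Omega V$ were already noted in Remark \ref{universal} (cf. \cite[Lemma 2.1]{chen-shen-zhou}). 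Hence Proposition \ref{lemma3.1} applies to both $V$ and $\Omega V$, giving isomorphisms in $\hat{\Ca}$
\[
R(\A,V)\cong R(\A',H_{\mathscr{X}(\A)}(V))\qquad\text{and}\qquad R(\A,\Omega V)\cong R(\A',H_{\mathscr{X}(\A)}(\Omega V)).
\]

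Next I would identify $H_{\mathscr{X}(\A)}(\Omega V)$ with $\Omega_{\A'}H_{\mathscr{X}(\A)}(V)$. Since $P_{\mathscr{X}(\A)}$ is a projective $\A$-module, $H_{\mathscr{X}(\A)}=\Hom_\A(P_{\mathscr{X}(\A)},-)$ is exact on $\A$-mod, and by Remark \ref{rem00} its restriction is an exact equivalence $\mathscr{C}(\A)\xrightarrow{\sim}\A'\text{-mod}$ sending the projective generator $P_{\mathscr{X}(\A)}$ to $\A'$. An exact equivalence preserves projectivity, essential epimorphisms (hence projective covers), and kernels; thus it carries $P(V)\twoheadrightarrow V$ to a projective cover of $H_{\mathscr{X}(\A)}(V)$ over $\A'$ and carries $\Omega V=\ker(P(V)\to V)$ to $\Omega_{\A'}H_{\mathscr{X}(\A)}(V)$. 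Finally, $\A'$ is a basic connected self-injective Nakayama algebra, hence a Frobenius $\k$-algebra (every basic self-injective finite-dimensional algebra is Frobenius), and $H_{\mathscr{X}(\A)}(V)$ is non-projective because $V$ is non-projective in $\mathscr{C}(\A)$ and the equivalence preserves projectivity. Therefore Remark \ref{univ}(3), i.e. \cite[Prop. 2.4]{bleher15}, yields $R(\A',H_{\mathscr{X}(\A)}(V))\cong R(\A',\Omega_{\A'}H_{\mathscr{X}(\A)}(V))$ in $\hat{\Ca}$. Concatenating the two displayed isomorphisms with this one gives $R(\A,V)\cong R(\A,\Omega V)$.

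The only step that is not purely formal is the middle identification, namely that the exact equivalence $H_{\mathscr{X}(\A)}$ intertwines the syzygy operator of $\A$ restricted to $\mathscr{C}(\A)$ with that of $\A'$; this rests entirely on Ringel's fact that $\mathscr{C}(\A)$ is closed under projective covers inside $\A$-mod, which is what forces the syzygy computed in $\mathscr{C}(\A)$ to agree with the $\A$-syzygy for objects of $\mathscr{C}(\A)$. Everything else is a bookkeeping combination of Proposition \ref{lemma3.1}, Remark \ref{rem00}, and \cite[Prop. 2.4]{bleher15}.
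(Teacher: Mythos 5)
Your argument is correct and is essentially the paper's own proof: both transport the question through the exact equivalence $H_{\mathscr{X}(\A)}\colon\mathscr{C}(\A)\to\A'\textup{-mod}$, identify $H_{\mathscr{X}(\A)}(\Omega V)$ with $\Omega_{\A'}H_{\mathscr{X}(\A)}(V)$ using that $P(V)\twoheadrightarrow V$ remains a projective cover, note that $\A'$ is Frobenius, and conclude via \cite[Prop.\ 2.4]{bleher15} and Remark \ref{universal}. The only cosmetic difference is that you justify the syzygy identification by the general principle that an exact equivalence preserves projective covers and kernels, while the paper argues directly that $H_{\mathscr{X}(\A)}(\epsilon)$ is an essential epimorphism onto $H_{\mathscr{X}(\A)}(V)$ with projective source.
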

\begin{proof}
Let $0\to \Omega V\xrightarrow{\iota} P(V)\xrightarrow{\epsilon} V\to 0$
be a short exact sequence of left $\A$-modules defining $\Omega V$, in particular, $\epsilon$ is an essential epimorphism.  Since $H_{\mathscr{X}(\A)}(-)$ is exact and $H_{\mathscr{X}(\A)}(P(V))$ is a projective left $\A'$-module (see e.g. \cite[Prop. II.2.1 (b)]{auslander}), it follows that $H_{\mathscr{X}(\A)}(\epsilon):H_{\mathscr{X}(\A)}(P(V))\to H_{\mathscr{X}(\A)}(V)$ is an essential epimorphism of $\A'$-modules, which implies that $\Omega_{\A'}H_{\mathscr{X}(\A)}(V)=H_{\mathscr{X}(\A)}(\Omega V)$. On the other hand, since $\A'$ is a basic connected self-injective $\k$-algebra, it follows that $\A'$ is also Frobenius (see e.g. \cite[Cor. 4.3]{skow}). Therefore by Remarks \ref{univ} (iii) and  \ref{universal}, we have that in $\hat{\Ca}$,
\begin{align*}
R(\A,\Omega V)&\cong R(\A', H_{\mathscr{X}(\A)}(\Omega V))\cong R(\A', \Omega_{\A'}H_{\mathscr{X}(\A)}(V))\cong R(\A', H_{\mathscr{X}(\A)}(V))\cong R(\A, V). 
\end{align*}  
\end{proof}

Note that Theorem \ref{thm1} (ii) is an easy consequence of Remark \ref{rem00} and the description of the universal deformation rings of modules over basic self-injective Nakayama algebras given in \cite[Thm. 1.2]{bleher5}. This finishes the proof of Theorem \ref{thm1}. 

In the following example, we verify Theorem \ref{thm1} with a particular basic connected Nakayama $\k$-algebra with no simple projective modules in which there is no overlap (in the sense of \cite{chen-shen-zhou}). 

\begin{example}
Let $\A$ be the basic connected Nakayama $\k$-algebra whose quiver is as in (\ref{cyclequiver}) with $s(\A)=2$ and with Kupisch series $(4, 5)$, i.e., the length of the indecomposable projective left $\A$-modules $P(S_1)$ and $P(S_2)$ are $4$ and $5$, respectively. Note that this algebra coincides the the one studied in \cite[Example 5.10]{chen-shen-zhou}. It follows that $P(S_1)$ is a minimal projective left $\A$-module and there exists an exact sequence of $\A$-modules
\begin{equation*}
0\to E_1\to P(S_1)\to E_1\to 0,
\end{equation*}
where $E_1$ is the indecomposable left $\A$-module with composition series $\begin{matrix}S_1\\S_2\end{matrix}$. Thus $E_1$ is a non-projective (elementary) Gorenstein-projective left $\A$-module. As a matter of fact, $E_1$ is the unique (up to isomorphism) non-projective indecomposable object in $\mathscr{C}(\A)$. Since $\A$ is a monomial algebra in which there is no overlap by \cite[Example 5.10]{chen-shen-zhou}, and $\Omega E_1=E_1$, we obtain by \cite[Thm. 5.2]{bekkert-giraldo-velez} that $R(\A, E_1)=R(\A, \Omega E_1)$ is universal and isomorphic to $\k[\![t]\!]/(t^2)$. On the other hand, it follows by Remark \ref{rem00} that $\A'=\End_\A(P(S_1))^\textup{op}$ is the basic connected self-injective Nakayama algebra with $e(\A')=1$ such that the radical length of the unique indecomposable projective $\A'$-module is given by $\ell\ell(\A')=4/2=2$. Let $S'_1$ denote the unique simple left $\A'$-module. Then  $\Omega_{\A'}S'_1=S'_1$, and $\Ext_{\A'}^1(S'_1, S'_1)=\SHom_{\A'}(\Omega_{\A'}S'_1, S'_1)=\k$. By \cite[Thm. 1.3 (ii)]{bleher15}, we have that the versal deformation ring $R(\A',S_1')=R(\A',\Omega_{\A'}S'_1)$ is universal and isomorphic to $\k[\![t]\!]/(t^2)$. Note also that we have $H_{\mathscr{X}(\A)}(E_1)=S'_1$, and following the notation in Theorem \ref{thm1} (ii), we obtain $\ell_{\mathscr{C}(\A)}=2$ and $d_{\mathscr{C}(\A),E_1}=0$, which gives $n=1$, $m_{E_1}=2$ and $J_1(2)=(t^2)$. This verifies Theorem \ref{thm1} for $\A$. It is also important to mention that $\A$ is a non-self-injective Gorenstein $\k$-algebra with injective dimension $2$.
\end{example}

\subsection{Proof of Theorem \ref{thm2}}

Let $\k$ and $\hat{\Ca}$ be as in \S\ref{int}.  We first need the following result.

\begin{lemma}\label{lemma3.7}
Let $\Sigma$ be a triangular matrix $\k$-algebra as in (\ref{triangalgebra}).
\begin{enumerate}
\item For all morphisms $\theta: R\to R'$ in $\hat{\Ca}$, and all left $R\Sigma$-modules $\begin{pmatrix} M\\N\end{pmatrix}_g$, there exists and isomorphism of left $R'\Sigma$-modules 
\begin{equation*}
R'\otimes_{R,\theta}\begin{pmatrix} M\\N\end{pmatrix}_g\cong \begin{pmatrix} R'\otimes_{R,\theta}M\\R'\otimes_{R,\theta}N\end{pmatrix}_{\mathrm{id}_{R'}\otimes g}
\end{equation*}
\item Let $\begin{pmatrix} V\\W\end{pmatrix}_f$ be a left $\Sigma$-module and let $R$ be a ring in $\hat{\Ca}$. Then $\left(\begin{pmatrix} M\\N\end{pmatrix}_g, \begin{pmatrix} \phi_M\\\phi_N\end{pmatrix}\right)$ is a lift of $\begin{pmatrix} V\\W\end{pmatrix}_f$ over $R$ if and only if $(M, \phi_M)$ and $(N, \phi_N)$ are lifts of $V$ and $W$ over $R$, respectively.  
\end{enumerate}
\end{lemma}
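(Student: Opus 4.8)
The plan is to prove both statements by unwinding the componentwise description of $R\Sigma$-modules recalled in \S\ref{ss3}, together with the fact that base change along a morphism in $\hat{\Ca}$ is compatible with the tensor products occurring in the structure maps. For part (i), I would begin from the canonical identifications $R'\Sigma\cong R'\otimes_{R,\theta}R\Sigma$ and $B_{R'}\cong R'\otimes_{R,\theta}B_R$ (an isomorphism of $R'$-algebras and one of $R'\A$-$R'\G$-bimodules, respectively), both of which follow from $R\Sigma=R\otimes_\k\Sigma$, $B_R=R\otimes_\k B$ and associativity of the tensor product. Given a left $R\Sigma$-module $\begin{pmatrix}M\\N\end{pmatrix}_g$ with $g\colon B_R\otimes_{R\G}N\to M$ an $R\A$-linear map, applying the idempotents $\begin{pmatrix}1_\A&0\\0&0\end{pmatrix}$ and $\begin{pmatrix}0&0\\0&1_\G\end{pmatrix}$ of $\Sigma$ (which are preserved by the isomorphism $\Phi$ and by base change) shows that the underlying left $R'\A$-module and left $R'\G$-module of $R'\otimes_{R,\theta}\begin{pmatrix}M\\N\end{pmatrix}_g$ are $R'\otimes_{R,\theta}M$ and $R'\otimes_{R,\theta}N$. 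It then remains to identify the structure map, for which I would use the canonical isomorphism of left $R'\A$-modules, natural in $N$,
\[
B_{R'}\otimes_{R'\G}(R'\otimes_{R,\theta}N)\;\cong\;(R'\otimes_{R,\theta}B_R)\otimes_{R'\G}(R'\otimes_{R,\theta}N)\;\cong\;R'\otimes_{R,\theta}(B_R\otimes_{R\G}N),
\]
and check that under it the structure map of $R'\otimes_{R,\theta}\begin{pmatrix}M\\N\end{pmatrix}_g$ becomes $R'\otimes_{R,\theta}g=\mathrm{id}_{R'}\otimes g$; this yields the asserted isomorphism of left $R'\Sigma$-modules.

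For part (ii), I would first apply part (i) with $\theta=\pi_R\colon R\to\k$ to identify $\k\otimes_R\begin{pmatrix}M\\N\end{pmatrix}_g$ with $\begin{pmatrix}\k\otimes_RM\\\k\otimes_RN\end{pmatrix}_{\mathrm{id}_\k\otimes g}$. Under this identification a morphism $\begin{pmatrix}\phi_M\\\phi_N\end{pmatrix}\colon\k\otimes_R\begin{pmatrix}M\\N\end{pmatrix}_g\to\begin{pmatrix}V\\W\end{pmatrix}_f$ is exactly the datum of a $\A$-linear map $\phi_M\colon\k\otimes_RM\to V$ and a $\G$-linear map $\phi_N\colon\k\otimes_RN\to W$ satisfying the compatibility square built into the definition of a $\Sigma$-morphism, and, by the description of morphisms in \S\ref{ss3}, $\begin{pmatrix}\phi_M\\\phi_N\end{pmatrix}$ is an isomorphism if and only if both $\phi_M$ and $\phi_N$ are. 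Finally, since $\Sigma$ is finite-dimensional, $M$ and $N$ are finitely generated over $R$, and the underlying $R$-module of $\begin{pmatrix}M\\N\end{pmatrix}_g$ is $M\oplus N$; since $R$ is local Noetherian, a finitely generated $R$-module is free if and only if it is projective, so $M\oplus N$ is free over $R$ if and only if both $M$ and $N$ are (being direct summands of a free $R$-module, each is projective, hence free). Combining these three equivalences gives that $\left(\begin{pmatrix}M\\N\end{pmatrix}_g,\begin{pmatrix}\phi_M\\\phi_N\end{pmatrix}\right)$ is a lift of $\begin{pmatrix}V\\W\end{pmatrix}_f$ over $R$ if and only if $(M,\phi_M)$ and $(N,\phi_N)$ are lifts of $V$ and $W$ over $R$, respectively.

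I do not expect a serious obstacle in this lemma. The two points that need genuine care are, in part (i), verifying that the chain of canonical isomorphisms relating $B_{R'}\otimes_{R'\G}(R'\otimes_{R,\theta}N)$ and $R'\otimes_{R,\theta}(B_R\otimes_{R\G}N)$ is $R'\A$-linear and natural in $N$, so that it genuinely intertwines the two structure maps; and, in part (ii), the appeal to the locality of $R$ that is needed to pass from ``$M\oplus N$ is free over $R$'' to ``$M$ and $N$ are free over $R$''.
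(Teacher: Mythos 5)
Your proposal is correct and takes essentially the same approach as the paper's proof. The only cosmetic differences are that the paper exhibits the isomorphism of part (i) by an explicit formula $r'\otimes\begin{pmatrix}x\\y\end{pmatrix}\mapsto\begin{pmatrix}r'\otimes x\\r'\otimes y\end{pmatrix}$ rather than via the idempotent decomposition and the naturality argument you spell out, and that in part (ii) the paper deduces freeness of $M$ and $N$ from the short exact sequence $0\to\begin{pmatrix}M\\0\end{pmatrix}_0\to\begin{pmatrix}M\\N\end{pmatrix}_g\to\begin{pmatrix}0\\N\end{pmatrix}_0\to 0$ together with Nakayama's Lemma, whereas you invoke directly that a finitely generated direct summand of a free module over the local Noetherian ring $R$ is free --- the same underlying fact.
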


\begin{proof}
(i). Let $\theta:R'\to R$ be a morphism in $\hat{\Ca}$ and let $\begin{pmatrix} M\\N\end{pmatrix}_g$ be a left $R\Sigma$-module. Then the map
\begin{equation*}
\Theta: R'\otimes_{R,\theta}\begin{pmatrix} M\\N\end{pmatrix}_g\to \begin{pmatrix} R'\otimes_{R,\theta}M\\R'\otimes_{R,\theta}N\end{pmatrix}_{\mathrm{id}_{R'}\otimes g}
\end{equation*}
defined for all $r'\in R'$, $x\in M$, $y\in N$ as  
\begin{equation*}
\Theta\left(r'\otimes\begin{pmatrix}x\\y\end{pmatrix}\right)=\begin{pmatrix}r'\otimes x\\r'\otimes y\end{pmatrix}
\end{equation*}
is a isomorphism of $R'\Sigma$-modules.
 
(ii). Assume first that $\left(\begin{pmatrix} M\\N\end{pmatrix}_g, \begin{pmatrix} \phi_M\\\phi_N\end{pmatrix}\right)$ is a lift of $\begin{pmatrix} V\\W\end{pmatrix}_f$ over $R$. By looking at the exact sequence of left $R\Sigma$-modules
\begin{equation}\label{freelift}
0\to \begin{pmatrix} M\\0\end{pmatrix}_0\to \begin{pmatrix}M\\N\end{pmatrix}_g\to \begin{pmatrix}0\\N\end{pmatrix}_0\to 0
\end{equation}
and by using Nakayama's Lemma, we obtain that since $\begin{pmatrix}M\\N\end{pmatrix}_g$ is free over $R$, then so are $\begin{pmatrix} M\\0\end{pmatrix}_0$ and $\begin{pmatrix} 0\\N\end{pmatrix}_0$. This clearly implies that $M$ and $N$ are both free over $R$. By using (i), we can identify $\k\otimes_R\begin{pmatrix}M\\N\end{pmatrix}_g$ with $\begin{pmatrix}\k\otimes_RM\\\k\otimes_RN\end{pmatrix}_{\mathrm{id}_\k\otimes g}$. Since $\begin{pmatrix} \phi_M\\\phi_N\end{pmatrix}: \begin{pmatrix}\k\otimes_RM\\\k\otimes_RN\end{pmatrix}_{\mathrm{id}_\k\otimes g}\to \begin{pmatrix} V\\W\end{pmatrix}_f$ is an isomorphism of left $\Sigma$-modules, it follows that $\phi_M:\k\otimes_RM\to V$ and $\phi_N:\k\otimes_RN\to N$ are isomorphisms of left $\A$-modules and left $\G$-modules, respectively. Thus $(M, \phi_M)$ and $(N,\phi_N)$ are lifts of $V$ and $W$ over $R$, respectively. Note that this argument is symmetric. This finishes the proof of Lemma \ref{lemma3.7}.

\end{proof}

In the following, we assume that $B$ is projective as a left $\A$-module and $\Sigma$ is Gorenstein with $\G$ of finite global dimension. Let $\begin{pmatrix} V\\W\end{pmatrix}_f$ be a non-projective Gorenstein-projective left $\Sigma$-module. By Theorem \ref{zhangtheorem}, we have a short exact sequence of Gorenstein-projective left $\A$-modules 
\begin{equation*}
0\to B\otimes_\G W\xrightarrow{f} V\to \mathrm{coker}\,f\to 0.
\end{equation*}
Let $R$ be a fixed Artinian ring in $\hat{\Ca}$ and let $\left(\begin{pmatrix} M\\ N\end{pmatrix}_g, \begin{pmatrix} \phi_M\\\phi_N\end{pmatrix}\right)$ be a lift of $\begin{pmatrix}V\\W\end{pmatrix}_f$ over $R$. Thus $(M,\phi_M)$ and $(N, \phi_N)$ are lifts of $V$ and $W$ over $R$, respectively.  Since $\G$ is of finite global dimension, it follows that $W$ is a projective left $\G$-module. Let $W_R=R\otimes_\k W$ and $\pi_{W,R}: \k\otimes_R W_R\to W$ be the natural isomorphism. Then $(W_R, \pi_{R,W})$ is a lift of $W$ over $R$. Since we also have that $R(\G, W)=\k$ by Remark \ref{univ} (i), it follows that $(N,\phi_N)$ and $(W_R, \pi_{R,W})$ are isomorphic lifts of $W$ over $R$. Thus without loss of generality, we can assume that $N=W_R$ and $\phi_N=\pi_{R,W}$. Note that $W_R$ is a projective left $R\G$-module, and this implies that $B_R\otimes_{R\G} W_R$ is a projective left $R\A$-module, for $B_R$ is also a projective left $R\A$-module. We define a map between sets of deformations   
\begin{equation}
m_{i^!,R}:\mathrm{Def}_\Sigma\left(\begin{pmatrix} V\\W\end{pmatrix}_f,R\right)\to \mathrm{Def}_\A(V,R)
\end{equation}
induced by the operator $i^!: \Sigma\textup{-Gproj}\to\A\textup{-Gproj}$ as in (\ref{functorgor}) as follows. For all lifts $\left(\begin{pmatrix} M\\ W_R\end{pmatrix}_g, \begin{pmatrix} \phi_M\\\pi_{W,R}\end{pmatrix}\right)$ of $\begin{pmatrix} V\\W\end{pmatrix}_f$ over $R$, define $m_{i^!,R}\left(\left[\begin{pmatrix} M\\ W_R\end{pmatrix}_g, \begin{pmatrix} \phi_M\\\pi_{R,W}\end{pmatrix}\right]\right)=[M,\phi_M]$.
By using Lemma \ref{lemma3.7}, it is straightforward to show that $m_{i^!,R}$ is a bijection that is natural with respect to morphisms $\alpha:R\to R'$ of Artinian objects in $\hat{\Ca}$. The continuity of the deformation functor implies that the versal deformation rings $R\left(\Sigma, \begin{pmatrix} V\\W\end{pmatrix}_f\right)$ and $R(\A,V)$ are isomorphic in $\hat{\Ca}$. This finishes the proof of Theorem \ref{thm2}.

The following example applies Theorem \ref{thm2} to a particular triangular matrix algebra. 

\begin{example}
Let $\Sigma$ be the $\k$-algebra defined by the quiver
\begin{equation*}
Q: \xymatrix{\underset{1}{\bullet}\ar[r]^{\alpha}&\underset{2}{\bullet}\ar[r]^{\beta}&\underset{3}{\bullet}\ar@(ur,dr)^{\gamma}} 
\end{equation*}
with the relation $\rho=\{\gamma^3\}$. Then it follows from \cite[Example 3.6]{xiong-zhang} that $\Sigma$ is a triangular matrix $\k$-algebra as in (\ref{triangalgebra}) with $\A=\k[x]/(x^3)$, $\G$ is given by the quiver $\underset{1}{\bullet}\xrightarrow{\alpha}\underset{2}{\bullet}$ and $B=e_3\Sigma (1-e_3)$. Moreover, $B\cong \A\oplus \A$ as left $\A$-modules, and $B\cong e_2\G\oplus e_2\G$ as right $\G$-modules, and the indecomposable non-projective Gorenstein-projective left $\Sigma$-modules are given by the following representations:
\begin{align*}
U_1=\xymatrix@1@=20pt{0\ar[r]&0\ar[r]&\k\ar@(ur,dr)^{0}}, &&U_2=\xymatrix@1@=20pt{0\ar[r]&0\ar[r]&\k^2\ar@(ur,dr)^{\tiny\begin{pmatrix}0&0\\1&0\end{pmatrix}}}.
\end{align*}
Note that $\Omega_\Sigma U_1=U_2$, and that $\End_\Sigma(U_1)=\k=\SEnd_\Sigma(U_2)$. Moreover, for $i=1,2$, we have that $\Ext_\Sigma^1(U_i,U_i)\cong \SHom_\Sigma(\Omega_\Sigma U_i,U_i)\cong \k$. All this together with Remark \ref{univ} (i), (ii) imply that for $i=1,2$, the versal deformation ring $R(\Sigma, U_i)$ is universal and a quotient of $\k[\![t]\!]$.  
Note that $\A$ is a basic connected self-injective Nakayama $\k$-algebra whose quiver with relations is given by $\left(\xymatrix{\underset{3}{\bullet}\ar@(ur,dr)^{\gamma}}, \{\gamma^3\}\right)$. Since $\G$ has finite global dimension and $B$ is projective as a left $\A$-module and as a right $\G$-module, it follows that $\Sigma$ is also a non-self-injective Gorenstein $\k$-algebra of infinite global dimension. Thus we can find the isomorphism classes of $R(\Sigma, U_1)$ and $R(\Sigma, U_2)$ by using Theorem \ref{thm2} as follows. There are only two isomorphism classes of indecomposable non-projective left $\A$-modules, namely, the simple left $\A$-module $V_1=S_3$, and the left $\A$-module $V_2$ whose composition series is $\begin{matrix} S_3\\S_3\end{matrix}$. Note that $\Omega V_1=V_2$. By \cite[Thm. 1.3 (ii)]{bleher15} or Theorem \ref{thm1}, it follows that the versal deformation rings $R(\A,V_1)$ and $R(\A,V_2)$ are both isomorphic to $\k[\![t]\!]/(t^3)$, which together with Theorem \ref{thm2}, imply that the universal deformation rings $R(\Sigma, U_1)$ and $R(\Sigma, U_2)$ are both isomorphic to $\k[\![t]\!]/(t^3)$.
\end{example}
\subsection*{Acknowledgments}
The author would like to express his gratitude to C. M. Ringel for providing a clarification concerning the functor $H_{\mathscr{X}(\A)}(-)$ as in (\ref{projective-generator}), which was used to prove Theorem \ref{thm1}, and to F. M. Bleher for valuable discussions concerning this note.   

\bibliographystyle{amsplain}
\bibliography{DeformationsNakayamaMatrix2}

\end{document}